\renewcommand{\Re}{\mathsf{Re}\,}
\newcommand{\referenza}{}
\newtheorem{thm}{Theorem}
\newtheorem*{thm*}{Theorem \referenza}
\newtheorem{thmm}{Theorem}
\newtheorem{cor}[thm]{Corollary}
\newtheorem*{cor*}{Corollary \referenza}
\newtheorem*{lem*}{Lemma \referenza}
\newtheorem*{prop*}{Proposition \referenza}
\newtheorem*{conj*}{Conjecture \referenza}
\newtheorem{rmk}[thm]{Remark}
\newtheorem{defi}[thm]{Definition}
\numberwithin{equation}{section}
\def \N {\mathbb N}
\def \R {\mathbb R}
\def \C {\mathbb C}
\def \Z {\mathbb Z}
\def\XXint#1#2#3{{\setbox0=\hbox{$#1{#2#3}{\int}$ }
\vcenter{\hbox{$#2#3$ }}\kern-.6\wd0}}
\title{Remarks on Chern-Einstein Hermitian metrics}
\author{Daniele Angella}
\address[Daniele Angella]{Dipartimento di Matematica e Informatica "Ulisse Dini"\\
Università di Firenze\\
viale Morgagni 67/a\\
50134 Firenze, Italy
}
\email{daniele.angella@gmail.com}
\email{daniele.angella@unifi.it}
\author{Simone Calamai}
\address[Simone Calamai]{Dipartimento di Matematica e Informatica "Ulisse Dini"\\
Università di Firenze\\
via Morgagni 67/A\\
50134 Firenze, Italy}
\email{scalamai@math.unifi.it}
\email{simocala@gmail.com}
\author{Cristiano Spotti}
\address[Cristiano Spotti]{Department of Mathematics - Centre for Quantum Geometry of Moduli Spaces\\
Aarhus Universitet\\
Building 1530, Office 330\\
Ny Munkegade 118\\
8000 Aarhus C, Denmark
}
\email{c.spotti@qgm.au.dk}
\keywords{Chern connection; Chern-Einstein metrics}
\subjclass[2010]{53B35, 32Q99, 53A30, 22E25}
\date{\today}
\begin{document}

\begin{abstract}
We study some basic properties and examples of Hermitian metrics on complex manifolds whose traces of the curvature of the Chern connection are proportional to the metric itself.
\end{abstract}

\maketitle

\section*{Introduction}
In this note we aim to study certain ``special'' Hermitian metrics $\omega$ on compact complex (mostly non-K\"ahler) manifolds $X$, focusing on the geometry of the Chern connection. In a previous paper \cite{angella-calamai-spotti-1} we started by looking at constant Chern-scalar curvature metrics in a conformal class of Hermitian metrics, giving partial results towards what we called  {\em Chern-Yamabe problem}.

Here instead we collect some results on the {\em Chern-Einstein problem(s)}. Namely, we look for Hermitian metrics whose Chern-Ricci forms are (non-necessarily constant) multiple of the metric itself. Note that, due to the lack of Bianchi symmetry, we have actually three different ways to contract the curvature.

A partial account on the literature includes \cite{goldberg, gauduchon-cras1980, gauduchon-cras1981, gauduchon-ivanov, catenacci-marzuoli, balas, tosatti, streets-tian, liu-yang-2, podesta, dellavedova, alekseevsky-podesta, dellavedova-gatti}, see also the discussion at \cite{mathoverflow-tosatti-ustinovskiy}. We now survey our main results.

\subsection*{First-Chern-Einstein metrics}
The {\em first-Chern-Ricci form} $\mathrm{Ric}^{(1)}(\omega)$ arises by tracing the endomorphism part of the Chern-curvature tensor. It yields a closed real $(1,1)$-form that represents the first Bott-Chern class $c_1^{BC}(X) \in H^{1,1}_{BC}(X)$.  We claim that the corresponding Chern-Einstein problem, {\itshape i.e.},  the search for Hermitian metrics $\omega$ satisfying
$$ \mathrm{Ric}^{(1)}(\omega) = \lambda \, \omega \qquad \text{ for } \lambda \in \mathcal{C}^\infty(X;\mathbb R) , $$
is basically understood. If $c_1^{BC}(X)=0$ there exists a Chern-Ricci-flat metric in any conformal class, and $X$ is called a {\em (non-K\"ahler) Calabi-Yau manifold} \cite{tosatti}, and non-K\"ahler examples actually do exist, {\itshape e.g.} Kodaira surfaces. If instead we are looking  to not identically zero (non-necessarily constant, otherwise the statement is completely trivial) Einstein factor $\lambda$, it turns out that the manifold is actually K\"ahler of a very special type, and all first-Chern-Einstein metrics are easy to describe.

\begin{thmm}[see Theorem \ref{1ChE}]
Let $(X^n,g)$ be a compact first-Chern-Einstein manifold, with non-identically-zero Einstein factor. Then $g$ is conformal to a  K\"ahler metric in $\pm c_1(X)$, with conformal factor depending explicitly on the Ricci potential of the K\"ahler metric. Conversely, starting with a  K\"ahler metric in $\pm c_1(X)$, one can always construct a Chern-Einstein metric, unique up to scaling, in the conformal class.
\end{thmm}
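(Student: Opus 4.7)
The plan is to exploit closedness of the first-Chern-Ricci form to derive a pointwise linear equation for $\lambda$, forcing $\lambda$ to have constant sign, and then to recognize $|\lambda|\omega$ as a K\"ahler metric in $\pm c_1(X)$ via the standard conformal change formula for the Chern-Ricci.

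For the forward direction, the starting point is that $\mathrm{Ric}^{(1)}(\omega)=-i\partial\bar\partial\log\det g$ is $d$-closed, so the Einstein equation gives $d(\lambda\omega)=0$, i.e., $\partial\lambda\wedge\omega+\lambda\,\partial\omega=0$ together with its conjugate. Contracting with the Lefschetz adjoint $\Lambda$ of $L=\omega\wedge\cdot$ and using the linear algebra identity $\Lambda(\alpha\wedge\omega)=(n-1)\alpha$ for $(1,0)$-forms $\alpha$, one obtains
\[
\partial\lambda \;=\; \lambda\,\eta, \qquad \eta := -\tfrac{1}{n-1}\Lambda(\partial\omega),
\]
with $\eta$ a globally defined smooth $(1,0)$-form on $X$, and hence $d\lambda=\lambda\vartheta$ for the real smooth $1$-form $\vartheta:=\eta+\bar\eta$. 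This is the step I expect to be the crux: along any smooth path $\gamma:[0,1]\to X$, the function $t\mapsto\lambda(\gamma(t))$ solves the linear ODE $\dot\lambda=\lambda\,\vartheta(\dot\gamma)$ with smooth coefficient, whose solution $\lambda(\gamma(t))=\lambda(\gamma(0))\exp\bigl(\int_0^t\vartheta(\dot\gamma)\,ds\bigr)$ is nowhere vanishing whenever $\lambda(\gamma(0))\neq 0$. Connectedness of the compact manifold $X$ together with $\lambda\not\equiv 0$ then forces $\lambda$ to have constant sign.

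Assume now $\lambda>0$ (the case $\lambda<0$ being symmetric). Then $\omega_0:=\lambda\omega$ is a positive closed $(1,1)$-form, hence a K\"ahler metric, and the conformal change formula $\mathrm{Ric}^{(1)}(e^u\omega)=\mathrm{Ric}^{(1)}(\omega)-n\,i\partial\bar\partial u$ applied with $u=\log\lambda$ gives $\mathrm{Ric}(\omega_0)-\omega_0=-n\,i\partial\bar\partial\log\lambda$. In particular $[\omega_0]=c_1(X)$ in de Rham cohomology, and the K\"ahler Ricci potential $f$ of $\omega_0$ (characterized by $\mathrm{Ric}(\omega_0)-\omega_0=i\partial\bar\partial f$) equals $-n\log\lambda$ up to an additive constant. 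Equivalently $\omega=e^{f/n}\omega_0$ up to a positive scale, which is the claimed explicit dependence of the conformal factor on the Ricci potential. The case $\lambda<0$ yields in the same way a K\"ahler metric $|\lambda|\omega\in -c_1(X)$.

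For the converse, given any K\"ahler metric $\omega_0\in c_1(X)$ with Ricci potential $f$, set $\omega:=e^{f/n}\omega_0$. The same conformal change formula produces
\[
\mathrm{Ric}^{(1)}(\omega) \;=\; \mathrm{Ric}(\omega_0)-i\partial\bar\partial f \;=\; \omega_0 \;=\; e^{-f/n}\omega ,
\]
which is the Chern-Einstein condition with factor $\lambda=e^{-f/n}>0$. The Ricci potential being defined only up to an additive constant corresponds exactly to the freedom of rescaling $\omega$ by a positive constant, so $\omega$ is unique in its conformal class up to scaling. The construction starting from $\omega_0\in -c_1(X)$ proceeds analogously with $\lambda<0$.
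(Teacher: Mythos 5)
Your proof is correct, but it takes a genuinely different route to the crux than the paper does. The paper passes to the unique unit-volume Gauduchon representative $\eta$ of the conformal class, sets $\tilde\lambda=\lambda e^{g}$, and uses $\partial(\tilde\lambda\eta)=0$ together with an integration-by-parts identity exploiting $\partial\bar\partial\eta^{n-1}=0$ to conclude $\int_X|d\tilde\lambda^{n-1}|^2\eta^n=0$; hence $\tilde\lambda$ is a nonzero constant and $\eta$ is K\"ahler. That is a global, integral argument resting on Gauduchon's existence theorem. You instead work directly with $d(\lambda\omega)=0$, contract the $(2,1)$-part with $\Lambda$ via the Lefschetz identity $\Lambda(\alpha\wedge\omega)=(n-1)\alpha$ to obtain the pointwise first-order equation $d\lambda=\lambda\vartheta$, and then run the linear ODE along paths to conclude that $\lambda$ is nowhere vanishing on the connected manifold; positivity of $\lambda\omega$ then hands you the K\"ahler metric at once. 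Your argument is more elementary (no Gauduchon metric, only the K\"ahler identities for $1$-forms and uniqueness for linear ODEs), it isolates the constant-sign statement as a purely local-to-global fact needing only connectedness rather than compactness, and it requires $n\geq2$ exactly where the paper's hypothesis does. The paper's route, on the other hand, lands directly in the Gauduchon gauge where the Einstein factor is literally constant, which is the normalization used elsewhere in the paper. The remaining steps --- identifying the conformal factor with the Ricci potential via $\sqrt{-1}\,\partial\bar\partial(f+n\log\lambda)=0$ and the maximum principle, the converse construction, and uniqueness up to scaling from the additive ambiguity of the Ricci potential --- coincide with the paper's. Two cosmetic points: you should say explicitly that a K\"ahler representative of a conformal class is unique up to a positive constant (from $dh\wedge\omega_0=0\Rightarrow dh=0$ for $n\geq2$), which is what makes ``unique up to scaling'' airtight, and your $c_1(X)$ differs from the paper's normalization by the harmless factor $2\pi$.
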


The above result is essentially telling that the first-Chern-Einstein problem is uninteresting for further investigations beside the fundamental K\"ahler-Einstein case or the non-K\"ahler Calabi-Yau situation \cite{szekelyhidi-tosatti-weinkove}. So let us discuss the more promising second-Chern-Einstein problem.

\subsection*{Second-Chern-Einstein metrics}
The {\em second-Chern-Ricci form} $\mathrm{Ric}^{(2)}(\omega)$ is obtained by tracing the two other indices of the Chern-curvature tensor by means of the metric. Thus a second-Chern-Einstein metric \cite{gauduchon-cras1980} is just a Hermitian metric on $TX$ that is Hermitian-Einstein by taking trace with itself.
A straighforward computation for the conformal change of the second-Chern-Ricci form shows  the second-Chern-Einstein problem depends only on the conformal class \cite{gauduchon-cras1980}. In particular, we can apply conformal methods to study the Einstein factor. Here we distinguish  between {\em weak-}second-Chern-Einstein and {\em strong-}second-Chern-Einstein metrics, according to the Einstein factor being a function, respectively, constant.

\begin{thmm}[see Theorem \ref{prop:2chei}, after {\cite{gauduchon-cras1981}, \cite[Theorems 3.1-4.1]{angella-calamai-spotti-1}}]
 Let $(X^n,g)$ be a compact weak-second-Chern-Einstein manifold. Then we can choose a representative in the conformal class of $g$ such that its Einstein factor has a sign, equal to the sign of the degree of the anti-canonical line bundle with respect to the  conformal Gauduchon metric. 
Moreover, if this sign is non-positive, then there is representative in the conformal class which is actually \emph{strong}-second-Chern-Einstein.
\end{thmm}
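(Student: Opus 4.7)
The strategy is to reduce the weak-second-Chern-Einstein equation to a scalar equation on the Chern scalar curvature, and then invoke the Chern-Yamabe analysis developed in \cite{angella-calamai-spotti-1}. Tracing $\mathrm{Ric}^{(2)}(\omega)=\lambda\,\omega$ with $\omega$ gives $s^{Ch}(\omega)=n\lambda$, so the Einstein factor is exactly $\lambda = s^{Ch}(\omega)/n$. Since the second-Chern-Ricci form only depends on the conformal class, every $\tilde\omega=e^{2f}\omega$ is again weak-second-Chern-Einstein with factor $\tilde\lambda = \tilde s^{Ch}/n$, so the problem reduces to finding a conformal representative whose Chern scalar curvature has the required sign (respectively, is constant).

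For the sign claim I would pass to the Gauduchon representative $\omega_G$ in the conformal class. Using the condition $\partial\bar\partial\,\omega_G^{n-1}=0$, one has the standard Hermitian identity $\int_X s^{(1)}(\omega_G)\,\omega_G^n = \int_X s^{Ch}(\omega_G)\,\omega_G^n$; by Chern-Weil the common value equals, up to a positive multiplicative constant, $\deg_{\omega_G}(-K_X)$. Thus the mean of $s^{Ch}(\omega_G)$ has the sign of $\deg_{\omega_G}(-K_X)$. An elliptic argument based on the conformal transformation law for $s^{Ch}$---carried out in the preliminary part of \cite[Theorems 3.1-4.1]{angella-calamai-spotti-1}---then yields a further conformal representative whose Chern scalar curvature has this sign \emph{pointwise}, and the sign statement for $\lambda$ follows via $\lambda = s^{Ch}/n$.

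For the strong-Einstein assertion, when $\deg_{\omega_G}(-K_X)\le 0$ the Chern-Yamabe equation is solvable with constant Chern scalar curvature by \cite[Theorems 3.1-4.1]{angella-calamai-spotti-1}: one obtains $\tilde\omega$ in the conformal class with $\tilde s^{Ch}\equiv c\le 0$. By the conformal invariance already noted, $\tilde\omega$ remains weak-second-Chern-Einstein, but now with the constant factor $\tilde\lambda = c/n$; this is exactly the strong-second-Chern-Einstein condition.

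The only substantive analytic input is thus the Chern-Yamabe existence theorem imported from the earlier paper; the new observation here is simply that, in the conformal class of a weak-second-Chern-Einstein metric, solving the Chern-Yamabe problem automatically promotes the representative to strong-second-Chern-Einstein. The main obstruction to extending the theorem therefore lies exactly in the case $\deg_{\omega_G}(-K_X)>0$, where the Chern-Yamabe problem is a non-coercive variational problem whose full solution is still open in general---so only pointwise positivity of $\lambda$, rather than constancy, can be asserted there.
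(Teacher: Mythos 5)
Your proof is correct and follows essentially the same route as the paper: conformal invariance of the weak-second-Chern-Einstein condition together with $S^{Ch}=n\lambda$, the identification of the mean of $S^{Ch}$ at the Gauduchon representative with the degree of $K_X^{-1}$, the preliminary conformal step to make the sign pointwise, and the Chern--Yamabe theorem of \cite{angella-calamai-spotti-1} to get a constant factor when the degree is non-positive. One phrasing to correct: the second-Chern-Ricci \emph{form} is not conformally invariant (it changes by $-\omega\,\Delta^{Ch}f$ under $\omega\mapsto e^{f}\omega$); what is conformally invariant is the \emph{property} of being a pointwise multiple of the metric, which is all your argument actually uses.
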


The assumption on the sign in the previous statement holds when the Kodaira dimension of $X$  is greater than or equal to zero, and  it would  be removed if one can prove the Chern-Yamabe conjecture \cite[Conjecture 2.1]{angella-calamai-spotti-1}. Thus, in this latter case, the weak-second-Chern-Einstein and strong-second-Chern-Einstein problems would became fully equivalent.

As a consequence, on compact manifolds, one gets a series of obstructions \cite{teleman-mathann, gauduchon-cras1981, gauduchon-bullSMF, yang-1}:
\begin{itemize}
\item the possible signs of the Einstein factor are determined according to $K_X$ and $K_X^{-1}$ being pseudo-effective or unitary flat (see Theorem \ref{thm:sign-gamma});
\item second-Chern-Ricci-flat metrics have $\mathrm{Kod}(X)\leq0$ (see Corollary \ref{cor:obstr-2-chern-flat});
\item positive second-Chern-Einstein manifolds do not have non-trivial holomorphic $p$-forms; negative second-Chern-Einstein manifolds do not have non-trivial holomorphic $p$-vector-fields, for $p\geq1$ (see Theorem \ref{thm:obstruction-2-chei}).
\end{itemize}

We also noticed that second-Chern-Ricci metrics $g$ are weakly-$g$-Hermitian-Einstein \cite{kobayashi-nagoya, lubke-teleman, kobayashi-book}, whence we get further obstructions as:
\begin{itemize}
\item the Bogomolov-L\"ubcke inequality \eqref{eq:bogomolov-lubke} holds \cite{bogomolov, lubke};
\item the Kobayashi-Hitchin correspondence assures that the holomorphic tangent bundle is $g$-semi-stable \cite{kobayashi, lubke-man}.
\end{itemize}

We conclude by providing several examples.
 Clearly holomorphically-parallelizable manifolds are Chern-flat, hence strong-second-Chern-Einstein with \emph{zero} Einstein factor. See \cite{boothby} for a characterization of compact Hermitian Chern-flat manifolds as quotients of complex Hermitian Lie groups. Furthermore, the classical Hopf manifold and the new examples by Fabio Podest\`a \cite{podesta} are strong-second-Chern-Einstein, both with \emph{positive} Einstein factor.
On the other hand, in the literature we did not find any example of  non-K\"ahler second-Chern-Einstein metric with {\em negative} Einstein factor, even local. The following result provide such examples on (non-compact) four-dimensional solvable Lie groups with invariant complex structures, as classified in \cite{snow-crelle, snow, ovando-manuscr, achk}, see \cite{ovando}. Explicit computations are performed with the help of Sage \cite{sage}.

\begin{thmm}[see Section \ref{subsubsec:ovando-negative}]
There are four-dimensional solvable Lie groups endowed with invariant complex structures  admitting strong-second-Chern-Einstein metrics with \emph{negative} Einstein factors.
\end{thmm}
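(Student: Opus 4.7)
The statement is an existence assertion, so the plan is to exhibit explicit examples by running through the classification of 4-dimensional solvable Lie algebras with invariant complex structures (as in Ovando \cite{ovando-manuscr,ovando}, building on Snow \cite{snow-crelle,snow} and ACHK \cite{achk}), and checking directly when the strong-second-Chern-Einstein equation with negative factor admits a solution in the space of left-invariant Hermitian metrics.

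First I would fix, for each isomorphism class $(\mathfrak{g},J)$ on the list, a basis $\{e_1,\dots,e_4\}$ adapted to $J$ (say with $Je_1=e_2$, $Je_3=e_4$) together with the structure constants $c^{k}_{ij}$ of $\mathfrak{g}$, and parametrize the space of left-invariant Hermitian metrics as the positive-definite Hermitian matrices $H=(h_{i\bar j})$ acting on the $(1,0)$-part of $\mathfrak{g}_\C$ in a chosen complex frame $\{Z_1,Z_2\}$. On each such $(\mathfrak{g},J,H)$, the Chern connection $\nabla$ is determined algebraically by the Lie bracket and the Hermitian form via the usual $(1,1)$-Koszul-type formula for the Chern $1$-forms $\theta^i_j$; this reduces $\nabla$ to a rational function of the entries of $H$ and of the $c^k_{ij}$. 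Then $R=\bar\partial\theta$ and the trace $\mathrm{Ric}^{(2)}(\omega)_{k\bar\ell}=h^{i\bar j}R_{i\bar j k\bar\ell}$ become explicit rational expressions in the $h_{i\bar j}$.

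Next I would impose the strong-Chern-Einstein system
\[
\mathrm{Ric}^{(2)}(\omega) \;=\; \lambda\,\omega, \qquad \lambda\in\R_{<0}\text{ constant},
\]
which, since all tensors are left-invariant, is a finite system of polynomial equations in the entries of $H$ and the scalar $\lambda$. By scaling $H$, one may normalize $|\lambda|=1$, so that the unknown is a point in the (open) cone of positive Hermitian matrices on $\mathfrak{g}^{1,0}$. I would then use Sage to symbolically solve this polynomial system for each entry of Ovando's list, discarding the solutions that are either not positive definite, give $\lambda\ge 0$, or yield a Kähler structure (which can be detected via closedness of the fundamental form, computable from the $c^k_{ij}$ and $H$). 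Any Lie algebra that survives this triage, together with the corresponding $H$, provides the required example.

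The main obstacle is not conceptual but organizational and computational: the classification contains several families with continuous parameters, and for each one the resulting polynomial system in four real unknowns can have intricate real components; a priori it is not clear that any solution with $\lambda<0$ exists at all, since previously only nonnegative $\lambda$ had been realized in the non-Kähler setting. I expect the search to single out a small number of genuinely solvable (non-nilpotent) algebras: on a nilpotent $\mathfrak{g}$ the Chern-scalar curvature integrates to a non-positive quantity with vanishing shown to force flatness, so negative $\lambda$ must come from an algebra whose unimodular character fails appropriately; this suggests focusing the search on the non-unimodular entries of Ovando's list, where the deformation of the Lie bracket can push the trace of the Chern curvature into the negative regime. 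Once a candidate is located, verification is then a direct substitution, and the resulting left-invariant metric descends, where a cocompact lattice is not available, to an example on the (non-compact) simply connected Lie group itself, exactly as in the statement.
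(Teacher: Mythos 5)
Your overall strategy is the same as the paper's --- work invariantly on the four-dimensional solvable Lie groups of the Snow/Ovando classification, express $\mathrm{Ric}^{(2)}$ of a generic left-invariant Hermitian metric as a rational function of the metric coefficients, and look for solutions of $\mathrm{Ric}^{(2)}(\omega)=\lambda\omega$ with $\lambda<0$, using Sage for the bookkeeping. But as written this is a search plan, not a proof: the statement is a pure existence claim, and its entire content is the exhibition of at least one algebra and one metric for which the verification goes through. You never name a candidate, and your a priori reasons for expecting the search to succeed are heuristic (indeed you yourself concede that ``a priori it is not clear that any solution with $\lambda<0$ exists at all''); the parenthetical claim about nilpotent algebras forcing non-negative Chern scalar curvature is asserted without justification and is not needed. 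Until a specific $(\mathfrak g,J,H)$ is produced and checked, the existence statement remains open under your argument.

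The paper closes this gap by exhibiting the group $\mathfrak r_{4,-1,-1}=(14,-24,-34,0)$ with the invariant complex structure given by a $(1,0)$-coframe satisfying $d\varphi^1=-\tfrac{\sqrt{-1}}{2}\varphi^1\wedge\bar\varphi^1$ and $d\varphi^2=-\tfrac{\sqrt{-1}}{2}\varphi^1\wedge\varphi^2-\tfrac{\sqrt{-1}}{2}\varphi^2\wedge\bar\varphi^1$. The computation there shows something stronger and cleaner than what your triage would look for: for \emph{every} invariant Hermitian metric $\omega$ of the form \eqref{eq:metric} one gets
\begin{equation*}
\mathrm{Ric}^{(2)}(\omega)=\tfrac{1}{2}S^{Ch}(\omega)\,\omega,
\qquad
S^{Ch}(\omega)=-\frac{2s^2}{r^2s^2-|u|^2}<0,
\end{equation*}
and $d\omega\neq0$ always, so no polynomial system actually needs to be solved once the right algebra is identified --- the whole invariant family is strong-(2)-Chern-Einstein with negative factor and never K\"ahler. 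To repair your write-up, either carry out the classification-wide computation and record which entries survive, or simply present this (or another) explicit example with the curvature computation; without one of these, the proof is incomplete.
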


We can perform explicit computations on compact quotients of four-dimensional solvable Lie groups \cite{hasegawa-jsg}, including in particular the non-K\"ahler examples of Inoue surfaces and Kodaira surfaces. As regards strong-second-Chern-Einstein metrics on these manifolds, we have the following:

\begin{thmm}[see Theorem \ref{thm:solv-surfaces}]
Inoue surfaces and Kodaira surfaces, seen as quotients of solvable Lie groups endowed with invariant complex structures, do not admit any invariant second-Chern-Einstein metric.
On the other hand, Kodaira surfaces admit first-Chern-Ricci-flat metrics.
\end{thmm}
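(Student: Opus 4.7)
The plan is to work entirely at the level of the Lie algebra, exploiting the fact that an invariant Hermitian metric on a solvmanifold $\Gamma\backslash G$ corresponds to an inner product on the Lie algebra $\mathfrak{g}$ compatible with the chosen invariant complex structure $J$, and that the Chern connection, its curvature, and its traces are then completely determined by the structure constants and the metric coefficients. In particular, since $\mathrm{Ric}^{(2)}(\omega)$ is an invariant real $(1,1)$-form, if the Einstein equation $\mathrm{Ric}^{(2)}(\omega)=\lambda\omega$ holds then $\lambda$ is automatically constant, so that weak and strong Einstein coincide in the invariant category and the problem becomes a finite-dimensional algebraic one.

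First, for each case I would recall the explicit structure equations of the four-dimensional solvable Lie algebras underlying Inoue surfaces $S_M$, $S^{+}_{N,p,q,r;t}$, $S^{-}_{N,p,q,r}$ and the primary/secondary Kodaira surfaces, together with the invariant complex structure $J$ (following Hasegawa's classification, as used in the previous section of the paper). I would then parametrize the space of $J$-Hermitian inner products on $\mathfrak{g}$: up to the $U(2)$-freedom that preserves $J$, one can reduce this to a short list of real parameters $(t_1,t_2,\dots)$. In the associated $(1,0)$-coframe $\{\varphi^1,\varphi^2\}$, the structure equations $d\varphi^i = A^i{}_{jk}\varphi^j\wedge\varphi^k + B^i{}_{j\bar k}\varphi^j\wedge\bar\varphi^k + C^i{}_{\bar j\bar k}\bar\varphi^j\wedge\bar\varphi^k$ give the Chern connection forms via the standard formulas, and thence the Chern curvature and its second trace $\mathrm{Ric}^{(2)}(\omega)$.

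Second, the system $\mathrm{Ric}^{(2)}(\omega)=\lambda\omega$ becomes a system of polynomial equations in the metric parameters and in $\lambda$. I would perform this computation case by case with the help of Sage, exactly as in the rest of the paper, and show in each case that the system forces the vanishing of some metric parameter, contradicting positive definiteness. The main obstacle is organizational rather than conceptual: keeping track of the different Inoue families and the two Kodaira cases, and verifying by symbolic computation that no solution exists in the positive-definite chamber. A useful sanity check during this step is the Gauduchon-degree obstruction from Theorem \ref{thm:sign-gamma}, which already rules out certain signs of $\lambda$ depending on whether $K_X$ or $K_X^{-1}$ is pseudo-effective.

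For the second statement, I would use that the Kodaira surface is a quotient of a complex nilpotent Lie group, so that the canonical bundle $K_X$ is holomorphically trivial and in particular $c_1^{BC}(X)=0$. By the theorem of Tosatti recalled in the introduction, every conformal class then contains a first-Chern-Ricci-flat representative; I would exhibit such a representative as an invariant metric by direct computation, choosing the invariant metric whose determinant in a holomorphic frame is constant, so that $\mathrm{Ric}^{(1)}(\omega)=-\sqrt{-1}\,\partial\bar\partial\log\det(g)=0$ identically. This makes the existence statement explicit and complements the non-existence results of the first part.
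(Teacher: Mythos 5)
Your plan is essentially the proof in the paper: fix the Hasegawa structure equations for $S_M$, $S^{\pm}$ and the two Kodaira surfaces, write the general invariant Hermitian metric, compute $\mathrm{Ric}^{(2)}$ symbolically, and check that the Einstein system has no solution compatible with positive definiteness; the observation that invariance forces $\lambda$ to be constant is also exactly how the statement reduces to the strong case. Two cautions before you execute it. First, the proposed normalization ``up to the $U(2)$-freedom that preserves $J$'' is not available: a constant unitary change of the $(1,0)$-coframe preserves $J$ but does \emph{not} preserve the Lie algebra structure equations, so you may only quotient by automorphisms of $(\mathfrak g,J)$, which in these examples do not let you diagonalize the metric. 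If you set the off-diagonal coefficient to zero you lose generality; you must keep the full family \eqref{eq:metric} with parameters $r,s\in\R\setminus\{0\}$ and $u\in\C$, $r^2s^2-|u|^2>0$, as the paper does --- indeed in the Kodaira cases the decisive step is precisely that the $\varphi^1\wedge\bar\varphi^2$-coefficient of the Einstein tensor forces $u=0$ before the $\varphi^1\wedge\bar\varphi^1$-coefficient yields the contradiction. Second, the Kodaira groups are nilpotent real Lie groups with left-invariant complex structures, not complex Lie groups (the structure equations have nonzero $(1,1)$-components), so the phrase ``quotient of a complex nilpotent Lie group'' is inaccurate; the correct route to the last assertion is that $\varphi^1\wedge\varphi^2$ trivializes (primary case) or unitarily flattens (secondary case) $K_X$ with constant norm for any invariant metric, whence $\mathrm{Ric}^{(1)}(\omega)=-\sqrt{-1}\,\partial\bar\partial\log\det(h_{k\bar\ell})=0$ directly, with no appeal to Tosatti's existence theorem needed. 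With these two corrections your argument coincides with the paper's.
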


\begin{rmk}
In fact, as we learnt by Stefan Ivanov, second-Chern-Einstein metrics can not exist on non-K\"ahler compact complex surfaces, except for the Hopf surfaces, thanks to \cite[Theorem 2]{gauduchon-ivanov}, see also \cite{gauduchon-cras1980}.
We ask whether {\em there is any non-K\"ahler example of negative second-Chern-Einstein metric on a compact complex manifold} of higher dimension.
\end{rmk}

\begin{rmk}
We have not investigated the third way to contract the Chern curvature yet, since we are very skeptical about the possible geometric meaning of such quantity.
\end{rmk}

\bigskip

\begin{small}
\noindent{\sl Acknowledgments.}
The authors are grateful to Stefan Ivanov, Fabio Podestà, Valentino Tosatti, Yury Ustinovskiy for several useful discussions on the subject. During the preparation of this note, the first two named authors have been supported by the Project PRIN ``Varietà reali e complesse: geometria, topologia e analisi armonica'', by the Project SIR2014 "Analytic aspects in complex and hypercomplex geometry" (AnHyC) code RBSI14DYEB, and by GNSAGA of INdAM. DA is further supported by project FIRB ``Geometria Differenziale e Teoria Geometrica delle Funzioni''', and  SC  by  the Simons Center for Geometry and Physics, Stony Brook University.  The third-named author is supported by AUFF Starting Grant 24285,  Villum Young Investigator 0019098, and DNRF95 QGM ``Centre for Quantum Geometry of Moduli Spaces''.
\end{small}

\section{Preliminaries and notation}
Let $X^n$ be a complex $n$-dimensional manifold endowed with a Hermitian structure $h=g-\sqrt{-1}\omega$, where $g$ is the associated Riemannian metric and $\omega = g(\_,J\_)$ is the associated $(1,1)$-form; here, we consider $T^*X$ endowed with the dual complex structure $J\alpha:=\alpha(J^{-1}\_)$.
The holomorphic tangent bundle $T_X$ is then a holomorphic Hermitian bundle, and we consider its Chern connection $\nabla^{Ch}$, namely, the unique Hermitian connection on $T_X$ extending the Cauchy-Riemann operator $\bar\partial$.
We denote by $\Theta := (\nabla^{Ch})^2 \in \wedge^2(X;\mathrm{End}\,T_X) \stackrel{g}{\simeq} \wedge^2(X;T_X\otimes \overline{T_X})$ the curvature tensor of the Chern connection. In local holomorphic coordinates $z^j$, denoting with $h_{i\bar{j}}$ the Hermitian matrix representing the Hermitian structure $h$, the curvature $\Theta(\omega)$ has the following expression:
$$ \Theta(\omega)= \Theta_{i\bar{j}k\bar{\ell}} \sqrt{-1} dz^i \wedge d\bar z^j \otimes \sqrt{-1} dz^k \wedge d\bar z^\ell $$
where
$$ \Theta_{i\bar{j}k\bar{\ell}} = - \frac{\partial^2 h_{k\bar{\ell}}}{\partial z^i \partial \bar z^j} + h^{p\bar{q} } \frac{\partial h_{k \bar{q}}}{\partial z^i}\frac{\partial h_{p \bar{\ell}}}{\partial \bar z^j}.$$

\subsection{Chern-Ricci curvatures and Chern-Einstein metrics}

There are   essentially three, in general different, ways to contract the above quantity $\Theta(\omega)$.
We can define two real $(1,1)$-forms as follows, \cite[Section I.4]{gauduchon-mathann}, see also \cite[Section 3.1.2]{liu-yang-2}. In local coordinates, the {\em first Chern-Ricci form} is
\begin{eqnarray*}
\mathrm{Ric}^{(1)}(\omega) &:=& \mathrm{tr}\, \Theta(\omega) = \mathrm{tr}\,\Theta_{i\bar j \bullet}^{\phantom{i\bar j\bullet}\bullet} \sqrt{-1} dz^i \wedge d\bar z^j \\
&=& h^{k\bar{\ell}} \Theta_{i\bar{j}k\bar{\ell}} \sqrt{-1} dz^i \wedge d\bar z^j = -\frac{\partial^2\log\det \, (h_{k\bar\ell}) \, }{\partial z^i \partial\bar z^j} dz^i\wedge d\bar z^j ,
\end{eqnarray*}
and the {\em second Chern-Ricci form} is
$$
\mathrm{Ric}^{(2)}(\omega) =  \mathrm{tr}_g \Theta_{\bullet\bullet k\bar \ell} \sqrt{-1} dz^k \wedge d\bar z^\ell
= h^{i\bar{j}} \Theta_{i\bar{j}k\bar{\ell}} \sqrt{-1} dz^k \wedge d\bar z^\ell .
$$
Note that $\mathrm{Ric}^{(1)}(\omega)$ is a closed $(1,1)$-form that represents the first Chern class $c_1(X):=c_1(T_X)=c_1(K_X^{-1})\in H^2(X;\R)$. More precisely, it represents the first Bott-Chern class $c_1^{BC}(X)\in H^{1,1}_{BC}(X)$. On the other hand, $\mathrm{Ric}^{(2)}(\omega)$ is in general not closed.

We also notice that there is a third way to contract the curvature $\Theta$, namely, define the tensor
$$ \mathrm{Ric}^{(3)}_{k\bar{j}}(\omega) = \mathrm{tr}_g \Theta_{\bullet \bar j k \bullet} = h^{i\bar{\ell}} \Theta_{i\bar{j}k\bar{\ell}} = \overline{ h^{i\bar{\ell}} \Theta_{j\bar{\ell}i\bar{k}} }. $$
On a K\"ahler manifold, (or more in general for Hermitian metrics being K\"ahler-like in the sense of \cite{yang-zheng},) the three Chern-Ricci curvatures coincide.

\begin{defi}
For $i\in \{1,2\}$ a Hermitian manifold $(X^n,g)$ is called \emph{(i)-Chern-Einstein} if
$$\mathrm{Ric}^{(i)}(\omega)= \lambda \,  \omega,$$
for some real-valued \emph{function} $\lambda \in C^{\infty}(X;\R)$, which is called the {\em Einstein factor}, where $\mathrm{Ric}^{(i)}(\omega)$ denotes the $(i)$-contraction as above and $\omega$ denotes the natural defined $(1,1)$-form associated to the metric.
\end{defi}
Sometimes we distinguish between {\em weak-(i)-Chern-Einstein} and {\em strong-(i)-Chern-Einstein} metrics, according to the Einstein factor being a function, respectively, constant.

The {\em Chern-scalar curvature} is defined as
$$ S^{Ch}(\omega) := \mathrm{tr}_g \mathrm{Ric}^{(1)} = \mathrm{tr}_g\, \mathrm{Ric}^{(2)} =  h^{i \bar j} h^{k\bar{\ell}} \Theta_{i\bar{j}k\bar{\ell}} , $$
and the {\em third Chern-scalar curvature} is defined as
$$ S^{(3)}(\omega) := h^{k \bar j} h^{i\bar{\ell}} \Theta_{i\bar{j}k\bar{\ell}} . $$
(See \cite[Equation (4.18)]{liu-yang-2} for a comparison between $S^{Ch}$ and $S^{(3)}$.)
Clearly, if $\omega$ is either (1)-Chern-Einstein or (2)-Chern-Einstein with Einstein factor $\lambda$, then $S^{Ch}(\omega)=n\lambda$.

\begin{rmk}
Compare \cite{goldberg-PAMS, draghici, alekseevsky-podesta, dellavedova, lejmi-upmeier, dellavedova-gatti} in the almost-complex setting.
\end{rmk}

\subsection{Formulas for the conformal changes}
Let $\omega_f:=e^f\omega$ be a conformal change of $\omega$. Then we compute for the curvature form $\Theta_f:=\Theta(e^f\omega)$ of the Chern connection for such metric:
$$(\Theta_f)_{i\bar{j}k\bar{\ell}}=e^f( \Theta_{i\bar{j}k\bar{\ell}}-h_{k\bar{\ell}} \partial^2_{i\bar{j}}f).$$
Thus: 
\begin{eqnarray}
\label{eq:conf-1}
\mathrm{Ric}^{(1)}(\omega_f) &=& \mathrm{Ric}^{(1)}(\omega)- n \sqrt{-1} \partial \bar \partial f , \\
\label{eq:conf-2}
\mathrm{Ric}^{(2)}(\omega_f) &=& \mathrm{Ric}^{(2)}(\omega)-  \omega \, \Delta^{Ch} f ,
\end{eqnarray}
where $\Delta^{Ch}$ denotes the Chern-Laplacian with respect to $\omega$, namely, $\Delta^{Ch}f := \frac{1}{2} \left( \omega, dd^c f \right)_\omega = \Delta_{\omega} f+( d  f,\vartheta)_\omega = -2h^{j\bar k}\partial^2_{j \bar k} f$, where $\vartheta$ is defined by $d\omega^{n-1}=:\vartheta\wedge\omega^{n-1}$. See \cite[Corollary 4.4]{lejmi-upmeier} in the more general almost-Hermitian setting.

\section{First-Chern-Einstein metrics}
Let $X^n$ be a compact complex manifold endowed with a Hermitian metric $\omega$. Consider the Chern connection $\nabla^{Ch}$, and the (1)-Chern-Ricci form. Denote by $T(X,Y):=\nabla^{Ch}_XY-\nabla^{Ch}_YX-[X,Y]$ the torsion tensor of $\nabla^{Ch}$, and by $\tau \stackrel{\text{loc}}{:=} T_{jk}^{k} dz^j$ its trace-of-torsion form.
In this section, we investigate the condition weak-(1)-Chern-Einstein, namely, when $\mathrm{Ric}^{(1)}(\omega)=\lambda \omega$ for $\lambda \in \mathcal C^\infty(X;\mathbb R)$.	

First of all, we notice that the Einstein factor $\lambda$ is constant non-zero if and only if the torsion form $\tau$ vanishes identically, by \cite[Theorem 4]{goldberg}. In this case, the metric is actually K\"ahler, since $d\mathrm{Ric}^{(1)}(\omega)=0$.
(The same conclusion by \cite{goldberg} holds true for the strong-(3)-Chern-Ricci curvature, see \cite[Theorem 4.1]{balas}: namely, (3)-Chern-Einstein metrics with constant non-zero Einstein factor are actually K\"ahler.)

A way to construct weak-(1)-Chern-Einstein metrics is via the following completely elementary trick based on the $\sqrt{-1}\partial\bar{\partial}$-lemma. If $X^n$ is compact and its first Chern class $c_1(X)$ has a sign, {\itshape i.e.}, there exists a K\"ahler metric $\omega \in \pm 2 \pi c_1(X) = \pm 2 \pi c_1^{BC}(X) \in H^{1,1}_{BC}(X)$, then there exists a function $f\in C^\infty(X;\R)$ (the Ricci potential), unique up to a constant, such that $\mathrm{Ric}^{(1)}(\omega)=\pm\omega +  \sqrt{-1}\partial\bar{\partial} f$.
Consider the conformal metric $\omega_f:= e^{\frac{f}{n}}\omega$. Then, thanks to \eqref{eq:conf-1},
$$\mathrm{Ric}^{(1)}(\omega_f)=\mathrm{Ric}^{(1)}(\omega)-\sqrt{-1}\partial\bar{\partial} f=\pm \omega= \pm e^{-\frac{f}{n}} \omega_f.$$
Note that if we instead assume $c_{1}^{BC}(X)=0$, the same argument gives a weak-(1)-Chern-Ricci-flat metric, unique up to scaling in its conformal class. 

Our first result shows that \emph{all} weak-(1)-Chern-Einstein metrics with non-zero Einstein factor are constructed in the above way.
Therefore, in non-K\"ahler geometry, the only interesting case is for (1)-Chern-Einstein-flat metrics. Note that our result has a flavor similar to the picture described in \cite{lebrun, chen-lebrun-weber} in the case of complex surfaces.

\begin{thm}\label{1ChE}
Let $(X^n,g)$ be a compact weak-(1)-Chern-Einstein manifold of dimension $n\geq 2$, and suppose that the Einstein factor $\lambda$ is not identically equal to zero. Then:
\begin{itemize}
 \item $X$ is Fano or anti-Fano, {\itshape i.e.} the first Chern class $c_1(X)$ has a sign;
 \item $g$ is conformally equivalent to a K\"ahler metric $\eta \in \pm 2 \pi c_1(X)$;
 \item up to scaling, $\lambda$ must be given by $\pm e^{-\frac{f}{n}}$,  where $f$ is a Ricci potential for $\eta$.
\end{itemize}
In particular, such metrics are unique up to scaling in their conformal class.
\end{thm}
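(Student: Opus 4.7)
The plan is to exploit the closedness of $\mathrm{Ric}^{(1)}(\omega)$ to force the conformal rescaling $|\lambda|\omega$ to be Kähler, and then to identify this Kähler form with a representative of $\pm 2\pi c_1(X)$ and read off $\lambda$ from its Ricci potential.

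First, since $\mathrm{Ric}^{(1)}(\omega)$ is always $d$-closed (it represents $2\pi c_1^{BC}(X) \in H^{1,1}_{BC}(X)$), the Einstein condition $\mathrm{Ric}^{(1)}(\omega) = \lambda\omega$ gives
\[
d(\lambda\omega) = d\lambda \wedge \omega + \lambda\, d\omega = 0.
\]
On the open set $\{\lambda \neq 0\}$ this rewrites as $d\omega = -d(\log|\lambda|) \wedge \omega$, so $|\lambda|\omega$ is a closed positive $(1,1)$-form there: $\omega$ is globally conformally Kähler on that set.

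The main obstacle is to extend this to all of $X$ by showing that $\lambda$ nowhere vanishes. Suppose for contradiction $\lambda(p) = 0$, and Taylor-expand $\lambda(x) = \sum_{m \geq 0} P_m(x)$ in local coordinates at $p$, with $P_m$ homogeneous of degree $m$. Inserting into $d\lambda \wedge \omega + \lambda\, d\omega = 0$ and extracting the lowest-degree monomials, the pointwise Lefschetz injectivity $\alpha \mapsto \alpha \wedge \omega|_p$ on $1$-forms (valid since $n \geq 2$) forces $dP_m \equiv 0$ for every $m$, hence $P_m \equiv 0$ for $m \geq 1$; so $\lambda$ is flat at $p$. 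Since the Chern--Einstein equation is a real-analytic second-order elliptic system (its top-order part being $h^{\bar j i}\partial^2_{k\bar\ell} h_{i\bar j}$ with $(h^{\bar j i})$ positive definite), standard regularity makes $\omega$, and hence $\lambda$, real analytic in suitable charts; flatness then forces $\lambda \equiv 0$ on the connected component of $p$, contradicting the hypothesis. Therefore $\lambda$ never vanishes, and by continuity has a constant sign $\epsilon \in \{\pm 1\}$ on connected $X$.

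Having $\lambda \neq 0$ globally, set $\eta := \epsilon\, \mathrm{Ric}^{(1)}(\omega) = |\lambda|\omega$: this is a bona fide Kähler metric with $[\eta] = 2\pi \epsilon\, c_1(X)$, giving both the Fano/anti-Fano dichotomy and the conformal equivalence of $g$ with (the Riemannian metric of) $\eta$. To identify $\lambda$, let $f$ be a Ricci potential for $\eta$, so that $\mathrm{Ric}^{(1)}(\eta) = \epsilon \eta + \sqrt{-1}\partial\bar\partial f$, and apply the conformal-change formula \eqref{eq:conf-1} to $\omega = e^{-\log|\lambda|}\eta$:
\[
\mathrm{Ric}^{(1)}(\omega) = \mathrm{Ric}^{(1)}(\eta) + n\sqrt{-1}\partial\bar\partial \log|\lambda|.
\]
Equating with $\lambda\omega = \epsilon\eta$ yields $\sqrt{-1}\partial\bar\partial(f + n\log|\lambda|) = 0$; since $X$ is now seen to be compact Kähler, this pluriharmonic function is constant, which can be absorbed into a rescaling of $\omega$ to produce $\lambda = \epsilon\, e^{-f/n}$. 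Uniqueness up to scaling in the conformal class is then automatic, as $\eta$ is determined up to a positive scalar and $f$ up to an additive constant.
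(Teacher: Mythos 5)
Your reduction of the whole theorem to the single claim that $\lambda$ is nowhere vanishing is the right crux, and everything downstream of that claim is correct: once $\lambda$ has no zeros, $\epsilon\,\mathrm{Ric}^{(1)}(\omega)=|\lambda|\,\omega$ is a closed positive $(1,1)$-form, hence a K\"ahler form in $\pm 2\pi c_1(X)$, and the identification of $\lambda$ with $\pm e^{-f/n}$ via \eqref{eq:conf-1} and the uniqueness statement both go through. The formal Taylor-series step is also sound: extracting the lowest-degree homogeneous part of $d\lambda\wedge\omega+\lambda\,d\omega=0$ and using the injectivity of $\alpha\mapsto\alpha\wedge\omega|_p$ on $1$-forms for $n\geq 2$ does show that every zero of $\lambda$ is a zero of infinite order.

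The genuine gap is the passage from ``$\lambda$ is flat at each of its zeros'' to ``$\lambda$ vanishes identically near a zero''. You justify this by claiming $\omega$ and $\lambda$ are real-analytic via elliptic regularity, but the equation $-\partial^2_{i\bar j}\log\det(h_{k\bar\ell})=\lambda\,h_{i\bar j}$ is not an elliptic system for $h$: its linearization in $h$ only involves the scalar $\mathrm{tr}(h^{-1}\dot h)$ through its complex Hessian, so the principal symbol annihilates every trace-free variation of the metric. In fact the theorem itself shows that solutions need not be analytic: for any merely smooth K\"ahler metric $\eta\in\pm 2\pi c_1(X)$ with Ricci potential $f$, the metric $e^{f/n}\eta$ is weak-(1)-Chern-Einstein with factor $\pm e^{-f/n}$, and it is not real-analytic whenever $\eta$ is not. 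Without analyticity, infinite-order vanishing does not propagate (compare $e^{-1/t^2}$), so your argument does not exclude an Einstein factor vanishing on a nonempty proper closed set. The paper circumvents this local unique-continuation problem by a global argument: writing $\omega=e^{g}\eta$ with $\eta$ the normalized Gauduchon representative, the closedness of $\lambda\omega=\tilde\lambda\eta$ (with $\tilde\lambda=\lambda e^{g}$) feeds into an integration by parts of $\tilde\lambda^{n-1}\sqrt{-1}\,\partial\bar\partial\tilde\lambda^{n-1}\wedge\eta^{n-1}$ over $X$, using $\partial\bar\partial\eta^{n-1}=0$, which forces $\tilde\lambda$ to be a nonzero constant and $\eta$ to be K\"ahler in one stroke; the nowhere-vanishing of $\lambda$ is then automatic. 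You would need to replace your analyticity claim by an argument of this global type.
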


\begin{proof}
 Let $\omega$ be a weak-(1)-Chern-Einstein metric with Einstein factor $\lambda$ not identically equal to zero, and let $\eta$ be the unique Gauduchon representative of volume one in its conformal class \cite[Théorème 1]{gauduchon-cras1977}. Thus $\omega =e^{g}\eta $ for some function $g$. 
 Define the function $\tilde \lambda:= \lambda \,  e^{g} $ and consider the $(n-1,n)$-form
 $$ \tilde \lambda^{n-1} \sqrt{-1}\, \overline\partial \tilde \lambda^{n-1}\wedge \eta^{n-1}.$$
 Note that, by the Chern-Einstein condition, $$\partial ( \tilde \lambda \eta)=  \partial (\lambda \omega)= \partial (\mathrm{Ric}^{(1)}(\omega))=0.$$
 Hence $\tilde \lambda \partial \eta =-\partial \tilde \lambda \wedge \eta$.
 Now we compute
 \begin{eqnarray*}
  \lefteqn{ \tilde \lambda^{n-1} \sqrt{-1}\, \partial\overline\partial \tilde \lambda^{n-1}\wedge \eta^{n-1} } \\[5pt]
&=& d\left(\tilde \lambda^{n-1} \sqrt{-1}\, \overline\partial \tilde \lambda^{n-1}\wedge
\eta^{n-1} \right) - \partial\tilde \lambda^{n-1}\wedge \sqrt{-1}\, \overline\partial\tilde
\lambda^{n-1}\wedge\eta^{n-1}\\[5pt]
&&+\tilde \lambda^{n-1} \sqrt{-1}\, \overline\partial\tilde
\lambda^{n-1}\wedge\partial\eta^{n-1} \\[5pt]
  &=& d\left(\tilde \lambda^{n-1} \sqrt{-1}\, \overline\partial \tilde \lambda^{n-1}\wedge
\eta^{n-1} \right) .
 \end{eqnarray*}

Recall that the Hodge-de Rham Laplacian $\Delta_{d,\eta}=[d,d^*_\eta]$ and the Chern Laplacian $\Delta^{Ch}_\eta=2\sqrt{-1}\,\mathrm{tr}_\eta\overline\partial\partial$ on smooth functions are related by \cite[pages 502--503]{gauduchon-mathann}:
$$ \Delta^{Ch}_\eta f = \Delta_{d,\eta} f + (df,\, \theta_\eta )_\eta , $$
where $\theta_\eta$ denotes the co-closed torsion $1$-form of the Gauduchon metric, that is, $d\eta^{n-1}=\theta_\eta\wedge\eta^{n-1}$, and $d^*_\eta\theta_\eta=0$. 
Integrating on the compact manifold $X$, we get:
 \begin{eqnarray*}
  0 &=& \int_X \tilde \lambda^{n-1} \sqrt{-1}\, \partial\overline\partial \tilde \lambda^{n-1}\wedge
\eta^{n-1} 
  = \frac{1}{2n} \int_X \tilde \lambda^{n-1} \Delta^{Ch}_\eta \tilde \lambda^{n-1} \eta^n \\[5pt]
  &=& \frac{1}{2n} \int_X \tilde \lambda^{n-1} \Delta_{d,\eta} \tilde \lambda^{n-1} \eta^n
  + \frac{1}{2n} \int_X \tilde \lambda^{n-1} \left( d \tilde \lambda^{n-1}, \theta_\eta \right)_\eta \eta^n\\[5pt]
  &=& \frac{1}{2n} \int_X \left| d \tilde \lambda^{n-1}\right|^2 \eta^n + \frac{1}{4n} \int_X \left( d \tilde \lambda^{2n-2}, \theta_\eta \right)_\eta \eta^n \\[5pt]
  &=& \frac{1}{2n} \int_X \left| d \tilde \lambda^{n-1}\right|^2 \eta^n .
  \end{eqnarray*}
Thus our function $\tilde \lambda $ is a non-zero constant and $\eta$ is in fact K\"ahler.

Using again the Chern-Einstein condition we find
$$ 2 \pi c_1(X) = 2 \pi c_1^{BC}(X) \ni \mathrm{Ric}^{(1)}(\eta) = \tilde \lambda  \eta +  \sqrt{-1}\, \partial\bar\partial (ng). $$
	 Hence the first Chern class $c_1(X)\lessgtr 0$ depending on the value of $\tilde \lambda $, {\itshape i.e.}, $X$ is Fano or anti-Fano. Thus our original weak-(1)-Chern-Einstein metric is conformally K\"ahler, and the Einstein factor $\lambda$ is given, up to scaling,  by the Ricci potential of a K\"ahler metric in $\pm 2\pi  c_1(X)$. Uniqueness up to scaling of weak-(1)-Chern-Einstein metrics in their conformal class follows immediately by the uniqueness up to constant of the Ricci potential.
\end{proof}

In particular, (1)-Chern-Einstein metrics on non-K\"ahler manifolds occur only for (1)-Chern-Ricci-flat metrics. In particular, $c_1^{BC}(X)=0$, which also implies $c_1(X)=0$, (but not the converse: compare {\itshape e.g.}, the Hopf manifold in Example \ref{ex:hopf}).
In this case, the following result gives a complete description:

\begin{thm}[see {\cite[Proposition 1.1, Theorem 1.2]{tosatti}, \cite[Corollary 2]{tosatti-weinkove}}]
On a compact Hermitian manifold $(X^n,g)$ with $c_1^{BC}(X)=0$, there exists a strong-(1)-Chern-Einstein metric with Einstein factor $\lambda=0$. It can be given either as a conformal transformation $e^\varphi g$ of $g$; or as associated to the form $\omega+\sqrt{-1}\,\partial\overline\partial \varphi$ where $\omega$ is the fundamental form of $g$. In both cases, $\varphi$ is unique up to additive constants.
\end{thm}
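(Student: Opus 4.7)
The plan has two parts, corresponding to the two constructions in the statement, and both begin from the same observation. Since $c_1^{BC}(X)=0$, the Bott-Chern class of $\mathrm{Ric}^{(1)}(\omega)$ is zero, so there exists a smooth real function $f\in C^\infty(X;\R)$, unique up to an additive constant, with
\[ \mathrm{Ric}^{(1)}(\omega) = \sqrt{-1}\,\partial\bar\partial f. \]
This $f$ will play the role of the Ricci potential.

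\textbf{Conformal construction.} I would set $\varphi := f/n$ and consider $\omega_\varphi := e^\varphi \omega$. A direct application of the conformal change formula \eqref{eq:conf-1} gives
\[ \mathrm{Ric}^{(1)}(\omega_\varphi) = \mathrm{Ric}^{(1)}(\omega) - n\sqrt{-1}\,\partial\bar\partial\varphi = 0, \]
which already shows existence. Uniqueness of $\varphi$ up to additive constants is immediate: if $e^{\varphi_1}\omega$ and $e^{\varphi_2}\omega$ are both Chern-Ricci-flat, \eqref{eq:conf-1} forces $\sqrt{-1}\,\partial\bar\partial(\varphi_1-\varphi_2)=0$, hence by the uniqueness part of the $\sqrt{-1}\partial\bar\partial$-lemma (or equivalently, the fact that a real pluriharmonic function on a compact complex manifold is constant by the maximum principle) the difference is constant.

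\textbf{Monge-Amp\`ere construction.} Here I would look for $\varphi$ with $\tilde\omega := \omega + \sqrt{-1}\,\partial\bar\partial\varphi>0$ and $\mathrm{Ric}^{(1)}(\tilde\omega)=0$. Using $\mathrm{Ric}^{(1)}(\tilde\omega) = \mathrm{Ric}^{(1)}(\omega) - \sqrt{-1}\,\partial\bar\partial\log(\tilde\omega^n/\omega^n)$ together with the identity $\mathrm{Ric}^{(1)}(\omega) = \sqrt{-1}\,\partial\bar\partial f$, the problem reduces to the complex Monge-Amp\`ere equation
\[ (\omega + \sqrt{-1}\,\partial\bar\partial\varphi)^n = e^{f+c}\,\omega^n \]
for an unknown constant $c\in\R$. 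I would then quote the Tosatti-Weinkove theorem \cite[Corollary 2]{tosatti-weinkove}, which asserts that on any compact Hermitian manifold such an equation admits a unique pair $(\varphi,c)$ (with $\varphi$ unique up to additive constants, and the admissibility requirement $\tilde\omega>0$), thereby producing the desired metric.

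\textbf{Main obstacle.} The conformal construction is elementary, while the $\sqrt{-1}\partial\bar\partial$-construction is the genuine difficulty. In the K\"ahler setting the constant $c$ would be fixed by the cohomological identity $\int_X \tilde\omega^n = \int_X \omega^n$, but for a general Hermitian $\omega$ this fails, and $c$ has to be treated as part of the unknowns and determined by the solvability of the equation. Overcoming this is exactly the content of Tosatti-Weinkove's extension of Yau's theorem to the Hermitian setting, whose delicate a priori estimates I would invoke as a black box rather than reprove.
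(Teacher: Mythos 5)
Your proposal is correct and matches the paper's treatment: the paper establishes the conformal construction by the same elementary $\sqrt{-1}\partial\bar\partial$-potential-plus-conformal-change trick (in the paragraph preceding Theorem \ref{1ChE}), and, like you, delegates the Monge-Amp\`ere construction $(\omega+\sqrt{-1}\,\partial\bar\partial\varphi)^n=e^{f+c}\omega^n$ to Tosatti--Weinkove, which is the only genuinely hard ingredient.
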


In \cite{szekelyhidi-tosatti-weinkove}, see also \cite{popovici}, by proving the Gauduchon's generalization of the Calabi's conjecture for compact complex manifolds satisfying $c_1^{BC}(X)=0$, it is proved that it is also always possible to find \emph{Gauduchon} (1)-Chern-Ricci-flat metrics, hence providing existence of non-K\"ahler special metrics satisfying both curvature and cohomological conditions.

\section{Second-Chern-Einstein metrics}

We consider now the second Chern-Ricci curvature, and the corresponding Chern-Einstein condition.

\subsection{Second-Chern-Einstein metrics in the conformal class}
The first fundamental remark is that, thanks to \eqref{eq:conf-2}, the weak-(2)-Chern-Einstein property is a property of the conformal Hermitian structure, and not just of the Hermitian structure, see also \cite{gauduchon-cras1980}. More precisely, under the conformal change $\omega \mapsto \exp(-f)\cdot\omega$, the Einstein factor changes as $\lambda \mapsto \exp(f) \cdot \left( \lambda + \Delta^{Ch}_{\omega} f \right)$.
In particular, thanks to the Gauduchon conformal methods \cite{gauduchon-cras1981}, (see {\itshape e.g.}, the preliminary step in the Proof of Theorem 4.1 in \cite{angella-calamai-spotti-1},) we can assume that $\lambda$ has a sign without loss of generality.

The sign is determined by an invariant of the conformal class as follows. In the conformal class $\{\omega\}$, choose the unique Gauduchon metric $\eta=e^f\omega$ with unitary volume, thanks to \cite[Th\'eor\`eme 1]{gauduchon-cras1981}. Define the {\em Gauduchon degree} of $\{\omega\}$ as the degree of the anti-canonical line bundle $K_X^{-1}$ with respect to the Gauduchon metric $\eta$, namely,
$$ \Gamma_X(\{\omega\}) := \int_X c_1^{BC}(K_X^{-1}) \wedge \frac{1}{(n-1)!} \eta^{n-1} = \int_X S^{Ch}(\eta) \eta^n . $$

Note indeed that, if $\eta$ is weak-(2)-Chern-Einstein with Einstein factor $\lambda_\eta$, so that $\omega$ is weak-(2)-Chern-Einstein with Einstein factor $\lambda_\omega=e^f(\lambda_\eta+\Delta_\eta^{Ch}f)$, then $S^{Ch}(\eta)=n\lambda_\eta=ne^{-f}\lambda_\omega-\Delta^{Ch}_\eta f$, where $\int_X \Delta^{Ch}_\eta f \eta^n=0$.
In particular, if $\mathrm{Kod}\,X \geq 0$, (respectively, $\mathrm{Kod}\,X > 0$,) then by \cite{gauduchon-cras1981}, see also \cite[\S I.17]{gauduchon-mathann}, we get that $\Gamma_X(\{\eta\})\leq 0$, (respectively, $\Gamma_X(\{\eta\}) < 0$,) for any conformal class $\{\eta\}$.

In the notation as above, the problem of finding a strong-(2)-Chern-Einstein metric in $\{\omega\}$ with {\em constant} Einstein factor, reduces to solve the Liouville-type equation
\begin{equation}\label{eq:chya}
\Delta_\eta^{Ch}f + \frac{1}{n} S^{Ch}(\eta) = \lambda e^{-f}
\end{equation}
for $(f,\lambda)\in \mathcal{C}^\infty(X;\mathbb R)\times \mathbb R$. We introduced and investigated this equation \eqref{eq:chya} in \cite{angella-calamai-spotti-1} under the name of {\em Chern-Yamabe equation}.
In particular, we proved \cite[Theorems 3.1-4.1]{angella-calamai-spotti-1} that, if $\Gamma_X(\{\omega\})$ is non-positive, then the Chern-Yamabe equation admits a unique solution up to scaling. Summarizing:

\begin{thm}[{\cite{gauduchon-cras1981}, \cite[Theorems 3.1-4.1]{angella-calamai-spotti-1}}]\label{prop:2chei}
 Let $(X^n,g)$ be a compact Hermitian manifold, and assume that the conformal class of $g$ is weak-(2)-Chern-Einstein. Then we can choose a representative in the conformal class of $g$ such that the Einstein factor has a sign, equal to the sign of the Gauduchon degree $\Gamma_X(\{g\})$.
 
Moreover, if $\Gamma_X(\{g\})\leq 0$, (for example, if $\mathrm{Kod}\,X\geq 0$,) then we can choose a representative in the conformal class having non-positive constant Einstein factor.
If one can prove the Chern-Yamabe conjecture \cite[Conjecture 2.1]{angella-calamai-spotti-1}, then the same holds true without any assumption on the sign of the Gauduchon degree.
\end{thm}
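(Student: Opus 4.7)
The plan is to reduce both assertions to statements about the Chern-scalar curvature within the conformal class, so that Gauduchon's conformal method \cite{gauduchon-cras1981} and the Chern-Yamabe analysis of \cite{angella-calamai-spotti-1} apply directly. First I would record the conformal transformation law for the Einstein factor. By \eqref{eq:conf-2}, if $\omega$ is weak-(2)-Chern-Einstein with factor $\lambda$ and $\omega_f = e^f\omega$, then
$$\mathrm{Ric}^{(2)}(\omega_f)=\lambda\,\omega - (\Delta^{Ch}_\omega f)\,\omega = e^{-f}\bigl(\lambda-\Delta^{Ch}_\omega f\bigr)\,\omega_f,$$
so $\omega_f$ is still weak-(2)-Chern-Einstein with factor $\lambda_f = e^{-f}(\lambda - \Delta^{Ch}_\omega f)$. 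This confirms that the weak-(2)-Chern-Einstein property is conformally invariant, and identifies the Einstein factor of any conformal representative with $\frac{1}{n}S^{Ch}$ of that representative.

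Next I would pick the Gauduchon representative $\eta$ of unit volume in the conformal class, available by \cite[Théorème 1]{gauduchon-cras1977}, and compute directly from the definition of the Gauduchon degree
$$n\int_X \lambda_\eta\,\eta^n \;=\; \int_X S^{Ch}(\eta)\,\eta^n \;=\; \Gamma_X(\{g\}),$$
so the mean of $\lambda_\eta$ shares the sign of $\Gamma_X(\{g\})$. To pass from this integral sign to a \emph{pointwise} sign on some (possibly different) conformal representative, I would invoke the preliminary conformal step from \cite{gauduchon-cras1981} (as used in the proof of \cite[Theorem 4.1]{angella-calamai-spotti-1}), which produces a representative whose Chern-scalar curvature has constant sign equal to $\mathrm{sgn}(\Gamma_X(\{g\}))$. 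By the equivalence $S^{Ch}=n\lambda$ above, that representative has Einstein factor of the required sign, proving the first assertion.

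For the second statement, finding a \emph{constant} Einstein factor in the conformal class amounts, via the transformation law, to solving the Liouville-type Chern-Yamabe equation \eqref{eq:chya} for a pair $(f,\lambda)$ with $\lambda$ constant. When $\Gamma_X(\{g\})\leq 0$, this is exactly the content of \cite[Theorems 3.1--4.1]{angella-calamai-spotti-1}; the constant produced is automatically non-positive by the integral identity above. The conditional clause follows verbatim from a full solution of the Chern-Yamabe conjecture in the positive case. The main obstacle here is the analytic one — resolving \eqref{eq:chya} when $\Gamma_X>0$ — which remains open, but is not needed for the proof as stated, since we can simply cite the already-established existence in the non-positive regime; everything else is essentially bookkeeping with the conformal formalism and the integration identity.
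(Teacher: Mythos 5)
Your proposal is correct and follows essentially the same route as the paper, which proves this theorem by the discussion preceding its statement: the conformal transformation law \eqref{eq:conf-2} for the Einstein factor, the identification $\lambda=\frac{1}{n}S^{Ch}$, the sign determination via the Gauduchon representative and the degree $\Gamma_X(\{g\})$, and the reduction of the constant-factor case to the Chern--Yamabe equation \eqref{eq:chya} solved in \cite[Theorems 3.1--4.1]{angella-calamai-spotti-1}. Nothing essential is missing.
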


Note in particular that the existence of a constant positive Einstein factor does not force its uniqueness in the conformal class, compare \cite[Section 5.5]{angella-calamai-spotti-1}.

The possible values for the sign of weak-(2)-Chern-Einstein metrics are summarized in the following:
\begin{thm}[{\cite{teleman-mathann}, \cite[Theorem 1.1, Theorem 3.4]{yang-1}}]\label{thm:sign-gamma}
Let $X^n$ be a compact complex manifold. We look at the image of the application $\Gamma_X$ that associates to each Hermitian conformal class $\{\eta\}\in \mathcal{H}erm\mathcal{C}onf(X)$, its Gauduchon degree $\Gamma_X(\{\eta\})\in\R$:
 \begin{itemize}
 \item $\Gamma_X(\mathcal{H}erm\mathcal{C}onf(X))=\mathbb{R}$ if and only if neither $K_X$ nor $K_X^{-1}$ is pseudo-effective;
 \item $\Gamma_X(\mathcal{H}erm\mathcal{C}onf(X))=\mathbb{R}^{>0}$ if and only if $K_X^{-1}$ is pseudo-effective and non-unitary-flat;
 \item $\Gamma_X(\mathcal{H}erm\mathcal{C}onf(X))=\mathbb{R}^{<0}$ if and only if $K_X$ is pseudo-effective and non-unitary-flat;
 \item $\Gamma_X(\mathcal{H}erm\mathcal{C}onf(X))=\{0\}$ if and only if $K_X$ is unitary-flat.
 \end{itemize}
\end{thm}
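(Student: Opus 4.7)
The plan is to reduce the theorem to a Hahn--Banach duality between the cone of pseudo-effective classes in $H^{1,1}_{BC}(X)$ and the Gauduchon cone of Aeppli classes, following the approach of Lamari, Teleman \cite{teleman-mathann} and Yang \cite{yang-1}. Since any Gauduchon representative $\eta$ satisfies $\partial\bar\partial\eta^{n-1}=0$, the form $\eta^{n-1}$ defines an Aeppli class $[\eta^{n-1}]_A\in H^{n-1,n-1}_A(X)$, and the definition of $\Gamma_X(\{\eta\})$ is exactly the canonical pairing
$$\Gamma_X(\{\eta\})=\frac{1}{(n-1)!}\,\bigl\langle c_1^{BC}(K_X^{-1}),\,[\eta^{n-1}]_A\bigr\rangle$$
under the nondegenerate Serre-type duality $H^{1,1}_{BC}(X)\otimes H^{n-1,n-1}_A(X)\to\mathbb R$. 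As $\{\eta\}$ ranges over all Gauduchon conformal classes, the classes $[\eta^{n-1}]_A$ sweep out an open convex cone $\mathcal G\subset H^{n-1,n-1}_A(X)$, the \emph{Gauduchon cone} in the sense of Michelsohn; every element of $\mathcal G$ arises in this way.

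Next, I would invoke the duality lemma: a class $\alpha\in H^{1,1}_{BC}(X)$ is pseudo-effective if and only if $\langle\alpha,\gamma\rangle\geq 0$ for every $\gamma\in\overline{\mathcal G}$. This is a Hahn--Banach separation between positive $(1,1)$-currents and $\partial\bar\partial$-closed strictly positive $(n-1,n-1)$-forms. Applied to $\alpha=c_1^{BC}(K_X^{-1})$ and to $-\alpha$, and combined with the identification of unitary-flat $K_X$ with the vanishing $c_1^{BC}(K_X)=0$, this already yields the sign structure in each of the four cases.

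Strictness and surjectivity require two further observations. In case (b), suppose $K_X^{-1}$ is pseudo-effective and the linear functional $\ell:=\langle c_1^{BC}(K_X^{-1}),\cdot\rangle$ vanishes at some $\gamma_0\in\mathcal G$; since $\ell$ is nonnegative on the open cone $\mathcal G$ and zero at an interior point, a standard convex-analysis argument ($\ell(\gamma_0\pm\varepsilon\gamma)\geq 0$ for $\varepsilon$ small) forces $\ell\equiv 0$ on $H^{n-1,n-1}_A(X)$, hence $c_1^{BC}(K_X^{-1})=0$ by nondegeneracy, contradicting the non-unitary-flat hypothesis. The full image $\mathbb R^{>0}$ then follows from continuity of $\Gamma_X$ together with the positive scaling $\gamma\mapsto t\gamma$, $t>0$, within the cone $\mathcal G$, which realises every positive value. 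Case (c) is symmetric. In case (a), the contrapositive of the duality gives $\gamma_+,\gamma_-\in\mathcal G$ realising both signs of $\Gamma_X$, and the intermediate value theorem applied along a segment in the convex cone $\mathcal G$ completes the surjection onto all of $\mathbb R$.

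The main obstacle I anticipate is the duality lemma itself: verifying rigorously that the pseudo-effective cone is the polar of $\mathcal G$ requires careful Hahn--Banach separation in the Fr\'echet topology of currents, together with regularisation of positive currents so that they can be paired against the smooth $(n-1,n-1)$ Gauduchon representatives. This is precisely the content of the results of Teleman \cite{teleman-mathann} and Yang \cite{yang-1} cited above, which I would invoke directly rather than reprove.
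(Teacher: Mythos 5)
The paper does not prove this statement at all: it is quoted verbatim from Teleman and from Yang's Theorems 1.1 and 3.4, so there is no in-paper argument to compare against. Your reconstruction of the proof from those references is the right skeleton: the pairing $\Gamma_X(\{\eta\})=\frac{1}{(n-1)!}\langle c_1^{BC}(K_X^{-1}),[\eta^{n-1}]_A\rangle$ is well defined on the Gauduchon cone, Lamari's Hahn--Banach duality identifies the pseudo-effective cone with the polar of $\overline{\mathcal G}$, unitary-flatness of $K_X$ is equivalent to $c_1^{BC}(K_X)=0$, and the open-cone argument correctly upgrades ``non-negative on $\mathcal G$ and vanishing somewhere'' to ``identically zero.'' This settles the sign dichotomies, i.e.\ the inclusions $\Gamma_X(\mathcal{H}erm\mathcal{C}onf(X))\subseteq\mathbb R^{>0}$, etc.

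There is, however, a genuine gap in your surjectivity arguments. The Gauduchon degree is defined on \emph{conformal classes} via the volume-one Gauduchon representative, so it is invariant under the scaling $\gamma\mapsto t\gamma$ in the cone: the forms $\gamma$ and $t\gamma$ are the $(n-1)$-th powers of $\eta$ and $t^{1/(n-1)}\eta$, which lie in the same conformal class and hence give the same value of $\Gamma_X$. (Equivalently, $\langle c_1^{BC}(K_X^{-1}),\gamma\rangle\cdot\mathrm{Vol}(\gamma)^{-(n-1)/n}$ is homogeneous of degree zero.) So scaling realises nothing new, and in case (a) the intermediate value theorem along a segment only yields an interval containing $0$ and points of both signs, not all of $\mathbb R$. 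Proving that the image is exactly $\mathbb R^{>0}$ (resp.\ $\mathbb R^{<0}$, $\mathbb R$) requires non-homothetic deformations of the Gauduchon form that change the degree and the total volume at different rates --- e.g.\ concentrating or spreading $\eta^{n-1}$ against a fixed representative of $c_1^{BC}(K_X^{-1})$ to drive the normalised degree to $0$ and to $\pm\infty$; this is exactly the extra content of Yang's Theorem 1.1, which you would need to cite for this step as well, not only for the duality lemma.
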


We recall that a holomorphic line bundle over $(X^n,g)$ compact Hermitian manifold is called {\em pseudo-effective} if it admits a (possibly singular) Hermitian metric with non-negative curvature (in the sense of currents); it is called {\em unitary-flat} if it admits a smooth Hermitian metric with zero curvature.

In particular, one gets an obstruction for the existence of weak-(2)-Chern-Einstein-flat metrics:

\begin{cor}[{\cite{gauduchon-cras1981}, \cite[Theorem 1.4]{yang-1}}]\label{cor:obstr-2-chern-flat}
Let $X^n$ be a compact complex manifold. It admits a weak-(2)-Chern-Einstein metric with Einstein factor $\lambda=0$ only if:
\begin{itemize}
\item either: $\mathrm{Kod}\,X=-\infty$ and neither $K_X$ nor $K_X^{-1}$ is pseudo-effective; 
\item or: $\mathrm{Kod}\,X=-\infty$ and $K_X$ is unitary-flat;
\item or: $\mathrm{Kod}\,X=0$ and $K_X$ is holomorphically-torsion (namely, there exists $m\in\N$ such that $K_X^{\otimes m}$ is trivial; in particular, $K_X$ is unitary-flat).
\end{itemize}
\end{cor}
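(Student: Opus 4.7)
The plan is to reduce the hypothesis to the vanishing of the Gauduchon degree $\Gamma_X(\{g\})$, then invoke the trichotomy of Theorem~\ref{thm:sign-gamma}, and finally translate each surviving alternative into a constraint on the Kodaira dimension.

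First I show $\Gamma_X(\{g\}) = 0$. Let $\omega$ denote the fundamental form of $g$, and let $\eta$ be the Gauduchon representative in the conformal class $\{g\}$. Writing $\omega = e^{-f}\eta$, the conformal transformation rule~\eqref{eq:conf-2} combined with $\mathrm{Ric}^{(2)}(\omega)=0$ yields $\mathrm{Ric}^{(2)}(\eta) = -\eta\cdot\Delta^{Ch}_\eta f$. Tracing with $\eta$ gives $S^{Ch}(\eta) = -n\,\Delta^{Ch}_\eta f$. Integrating against $\eta^n$ and using the Gauduchon identity $\int_X \Delta^{Ch}_\eta u \cdot \eta^n = 0$ for every smooth $u$ (an immediate consequence of $d^{\ast}_\eta\theta_\eta = 0$ via the decomposition $\Delta^{Ch}_\eta = \Delta_{d,\eta} + (d\,\cdot\,,\theta_\eta)_\eta$ recalled during the proof of Theorem~\ref{1ChE}) yields $\Gamma_X(\{g\}) = 0$.

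Theorem~\ref{thm:sign-gamma} then rules out the cases in which the image of $\Gamma_X$ is $\mathbb{R}^{>0}$ or $\mathbb{R}^{<0}$, leaving two possibilities: either (a) neither $K_X$ nor $K_X^{-1}$ is pseudo-effective, or (b) $K_X$ is unitary-flat. To pin down the Kodaira dimension I invoke two standard facts. A non-zero holomorphic section $s$ of some $K_X^{\otimes m}$ produces a singular Hermitian metric on $K_X$ (via $\log\lvert s\rvert^{-2/m}$) with positive curvature current, so $\mathrm{Kod}(X)\geq 0$ forces $K_X$ pseudo-effective. This rules out $\mathrm{Kod}(X)\geq 0$ in case~(a), giving the first bullet. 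In case~(b), a holomorphic section of a unitary-flat line bundle has constant pointwise norm: in any local unitary-flat frame it is the modulus of a holomorphic function, and the unimodular transition functions glue these into a globally defined plurisubharmonic function, which must be constant on the compact $X$. Hence such a section is either identically zero or nowhere-vanishing, and in the latter case trivializes the bundle. Thus $h^0(X,K_X^{\otimes m})\in\{0,1\}$ with value $1$ precisely when $K_X^{\otimes m}$ is trivial, giving $\mathrm{Kod}(X)=0$ iff $K_X$ is holomorphically-torsion (third bullet), and $\mathrm{Kod}(X)=-\infty$ otherwise (second bullet).

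The main technical point is Step~1: tracking signs and Laplacian conventions under the conformal change, and recognizing that the Gauduchon identity is precisely what forces the resulting volume integral to vanish. Once $\Gamma_X(\{g\})=0$ is in hand, the corollary is a formal synthesis of Theorem~\ref{thm:sign-gamma} with classical facts about sections of pseudo-effective and unitary-flat line bundles.
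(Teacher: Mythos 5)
Your argument is correct and follows exactly the route the paper intends: the paper states this corollary as an immediate consequence of Theorem~\ref{thm:sign-gamma} together with the observation (made just before Theorem~\ref{prop:2chei}) that $\int_X \Delta^{Ch}_\eta f\,\eta^n=0$ forces the Gauduchon degree of a $(2)$-Chern-Ricci-flat conformal class to vanish, deferring the remaining details to \cite{gauduchon-cras1981} and \cite{yang-1}. Your Step~1 and the translation of the two surviving alternatives of Theorem~\ref{thm:sign-gamma} into the Kodaira-dimension statements (via the singular metric induced by a pluricanonical section, and the constancy of the norm of a section of a unitary-flat bundle on a compact manifold) correctly fill in precisely those details.
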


Obstructions {\itshape à la} Bochner follow by \cite{gauduchon-bullSMF}, see also \cite[Theorem at page 1]{kobayashi-wu} and, more in general, \cite[Theorem 1.1]{liu-yang-1}, give further obstructions:

\begin{thm}[{\cite[Corollaire 2 at page 124]{gauduchon-bullSMF}, \cite[Corollary 1.2]{liu-yang-1}}]\label{thm:obstruction-2-chei}
Let $X^n$ be a compact complex manifold. If it admits a weak-(2)-Chern-Einstein metric with Einstein factor $\lambda\neq0$, then:
\begin{itemize}
\item in case $\lambda \geq 0$, then $H^{p,0}_{\overline\partial}(X)=H^0(X;\wedge^pT_X^*)=0$ for any $p\geq1$. In particular, the arithmetic genus is $\chi(X;\mathcal{O}_X)=1$, by the Hirzebruch-Riemann-Roch theorem, and $X$ does not admit finite covers;
\item in case $\lambda \leq 0$, then $H^0(X;\wedge^pT_X)=0$ for any $p\geq1$. In particular, there are no non-trivial holomorphic vector fields.
\end{itemize}
As for strong-(2)-Chern-Einstein metric with Einstein factor $\lambda=0$, then any holomorphic $p$-form and any holomorphic $p$-vector field is parallel with respect to the Chen connection.
\end{thm}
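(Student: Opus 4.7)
The approach is a Bochner--Kodaira type vanishing: apply the standard identity for holomorphic sections to $E=\wedge^{p}T_X^{*}$ in the case $\lambda\geq 0$, and to $E=\wedge^{p}T_X$ in the case $\lambda\leq 0$; then exploit the sign of $\lambda$ together with the strong maximum principle on the compact closed $X$ to force such a section to vanish. The strong-Chern-Einstein case $\lambda\equiv 0$ falls out of the very same identity and produces Chern parallelism.

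Concretely, for any holomorphic Hermitian bundle $(E,h)\to(X,\omega)$ with Chern curvature $\Theta^{E}$ and any holomorphic section $s$, a direct computation using the Chern compatibility $dh(\cdot,\cdot)=h(\nabla^{Ch}\cdot,\cdot)+h(\cdot,\nabla^{Ch}\cdot)$ together with $\nabla^{0,1}s=\bar\partial s=0$ yields the pointwise identity
$$ \sqrt{-1}\,\partial\bar\partial\,|s|_h^{2} \;=\; \sqrt{-1}\,\langle\nabla^{1,0}s,\nabla^{1,0}s\rangle_h \;-\; \langle\sqrt{-1}\,\Theta^{E}s,s\rangle_h $$
of real $(1,1)$-forms. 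Tracing with $\omega$, and using $\mathrm{tr}_\omega \sqrt{-1}\,\partial\bar\partial f=-\tfrac12\,\Delta^{Ch}f$, gives
$$ \Delta^{Ch}|s|^{2} \;=\; -2\,|\nabla^{1,0}s|^{2}_{\omega,h}\;+\;2\,\mathrm{tr}_\omega\langle\sqrt{-1}\,\Theta^{E}s,s\rangle_h. $$
On $E=\wedge^{p}T_X$ the induced Chern curvature acts as a derivation in the wedge factors; under the weak-(2)-Chern-Einstein hypothesis $\mathrm{Ric}^{(2)}(\omega)=\lambda\omega$ the endomorphism $\mathrm{tr}_\omega \sqrt{-1}\,\Theta^{T_X}$ equals $\lambda\cdot\mathrm{Id}_{T_X}$, hence its derivation-extension equals $p\lambda\cdot\mathrm{Id}_{\wedge^{p}T_X}$, and the curvature term collapses to $p\lambda\,|s|^{2}$. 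On $E=\wedge^{p}T_X^{*}$ the dual-bundle relation $\Theta^{T_X^{*}}=-(\Theta^{T_X})^{t}$ flips the overall sign and produces $-p\lambda\,|s|^{2}$. Altogether,
$$ \Delta^{Ch}|s|^{2}\;=\;-2\,|\nabla^{1,0}s|^{2}+2\,\varepsilon\, p\,\lambda\,|s|^{2},\qquad \varepsilon=\begin{cases}+1,& s\in H^{0}(X;\wedge^{p}T_X),\\ -1,& s\in H^{0}(X;\wedge^{p}T_X^{*}).\end{cases} $$

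Under the hypothesis $\lambda\geq 0$ applied to a $p$-form ($\varepsilon=-1$), or $\lambda\leq 0$ applied to a $p$-vector field ($\varepsilon=+1$), the right-hand side is everywhere $\leq 0$, so $\Delta^{Ch}|s|^{2}\leq 0$ on $X$. Since $\Delta^{Ch}=-2h^{j\bar k}\partial_j\partial_{\bar k}$ is a real second-order elliptic operator without zeroth-order term, the Hopf strong maximum principle on the compact closed manifold $X$ forces $|s|^{2}$ to be constant. Reinserting this into the identity gives $\nabla^{1,0}s\equiv 0$ and $\lambda\,|s|^{2}\equiv 0$; since by hypothesis $\lambda\not\equiv 0$, the non-negative constant $|s|^{2}$ must vanish somewhere and hence everywhere, so $s\equiv 0$. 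In the strong case $\lambda\equiv 0$ the identity reduces to $\Delta^{Ch}|s|^{2}=-2\,|\nabla^{1,0}s|^{2}\leq 0$; the same maximum principle yields $\nabla^{1,0}s\equiv 0$, which combined with $\nabla^{0,1}s=\bar\partial s=0$ gives $\nabla^{Ch}s=0$, i.e., $s$ is Chern-parallel. The auxiliary consequences are standard: $\chi(X;\mathcal O_X)=1$ is extracted via Hirzebruch--Riemann--Roch from the vanishings $H^{p,0}_{\bar\partial}(X)=0$ for $p\geq 1$, while the finite-cover claim uses that an \'etale cover inherits a weak-(2)-Chern-Einstein metric with the same sign for $\lambda$, together with the multiplicativity of $\chi(\mathcal O)$ under such covers.

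The main technical obstacle is careful sign bookkeeping: the flip between $\wedge^{p}T_X$ and $\wedge^{p}T_X^{*}$, and the convention for $\Delta^{Ch}$, so that the geometric sign of $\lambda$ matches the analytic sign in the Bochner inequality. A secondary but immediate point is that $\Delta^{Ch}$ is free of zeroth-order terms, which is exactly what allows the Hopf principle to be applied directly on compact closed $X$ without first reducing to a Gauduchon representative in the conformal class.
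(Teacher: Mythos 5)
Your Bochner argument is correct and coincides with the proof in the sources the theorem is quoted from (Gauduchon, Kobayashi--Wu, Liu--Yang); the paper itself supplies no proof and simply defers to those references. The sign bookkeeping for $\wedge^p T_X$ versus $\wedge^p T_X^*$, the observation that $\Delta^{Ch}$ has no zeroth-order term so the Hopf strong maximum principle applies directly on the compact manifold (avoiding any reduction to a Gauduchon representative), and the parallelism conclusion in the flat case all check out.
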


By Theorem \ref{prop:2chei}, we then know that the hypothesis of the above Theorem \ref{thm:obstruction-2-chei} hold depending on the sign of the Gauduchon degree of a (2)-Chern-Einstein conformal class.

\subsection{Second-Chern-Einstein metrics as Hermitian-Einstein metrics on the tangent bundle}
Another observation is that weak-(2)-Chern-Einstein metrics $g$ yield that that the holomorphic tangent bundle $(T_X, g)$ is weakly-$g$-Hermitian-Einstein, see {\itshape e.g.} \cite{kobayashi-nagoya, lubke-teleman, kobayashi-book}.
In particular, one has the obstruction given by the Bogomolov-L\"ubke inequality \cite{bogomolov, lubke}, see {\itshape e.g.} \cite[Theorem 2.2.3]{lubke-teleman}:
\begin{equation}\label{eq:bogomolov-lubke}
\left((n - 1) c_1(X)^2 - 2 n c_2(X) \right) \wedge \omega^{n - 1} \leq 0 ,
\end{equation}
and the Kobayashi-Hitchin correspondence \cite{kobayashi-nagoya, kobayashi, lubke-man}, see {\itshape e.g.} \cite[Theorem 2.3.2]{lubke-teleman}, namely, the holomorphic tangent bundle is $g$-semi-stable.

In analogy to the K\"ahler-Einstein/K-stable case, it seems interesting to understand if there is a more refined version of \emph{stability of the complex manifold itself} (plus some extra data, such as the Aeppli class of the $(n-1)$th power of the Gauduchon representative of a conformal class) that characterizes existence of (2)-Chern-Einstein metrics.

\begin{rmk}
Projectively flat metrics on the holomorphic tangent bundle of a compact complex manifold are a special instance of second-Chern-Einstein metrics. In \cite{calamai} is given a classification of them and is proven that each of them is strong-second-Chern-Einstein; futhermore there is shown (see also \cite{matsuo}) that projectively flat metrics providing examples of negative strong-second-Chern-Einstein metrics do not exist.
\end{rmk}

\subsection{Examples of second-Chern-Einstein metrics}

In this section we concentrate on examples of Chern-Einstein metrics. We begin with a first, classical and illustrative example: 

\subsubsection{Hopf manifold}\label{ex:hopf}\label{subsubsec:hopf-positive}
The {\em Hopf surface} admits a strong-(2)-Chern-Einstein Hermitian metric with constant positive Einstein factor, see {\itshape e.g.} \cite[Example 5.1]{yang-1}, \cite[Section 6]{liu-yang-1}, \cite[Theorem 1.5]{liu-yang-2}.

Let $X^2=(\C^2\setminus 0) \slash \Gamma$ be a Hopf surface, namely, a compact complex surface with universal cover $\C^2\setminus0$ and fundamental group isomorphic to $\Z$. We have $\mathrm{Kod}\,X=-\infty$. The anti-canonical bundle $K_X^{-1}$ is pseudo-effective, while the canonical bundle $K_X$ is not pseudo-effective. In particular, the Gauduchon degree is positive, $\Gamma_X(\{\eta\})>0$, for any conformal class $\{\eta\}$. The first Bott-Chern class is not zero, $c_1^{BC}(X)\neq0$, while its image in de Rham cohomology is zero, $c_1(X)=0$. We have that $h^{p,0}(X)=0$ and $H^0(X;\wedge^pT_X)=0$ for any $p\geq1$, and that the holomorphic tangent bundle $T_X$ is semi-stable.

Thanks to Theorem \ref{1ChE}, the manifold $X$ does not admit any (1)-Chern-Einstein metric, because $c_1^{BC}(X)\neq0$ and it is not K\"ahler. 

On the other hand, consider Hopf surfaces with $\Gamma$ generated by $\left(\begin{matrix}\alpha&0\\0&\beta\end{matrix}\right)$ where $0<|\alpha|=|\beta|<1$. Consider the standard locally conformally K\"ahler Hermitian metric induced by  a conformal change of the flat metric on $\C^2$ as below with $r=1$: it is straightforward to see that it is strong-(2)-Chern-Einstein with constant positive Einstein factor $\lambda=2$ ({\itshape e.g.} \cite[Section 6]{liu-yang-1}). More concretely, looking at the diffeomorphism type $X \simeq \mathbb S^1 \times \mathrm{SU}(2)$, the manifold $X$ is described by a coframe of global $(1,0)$-forms $\{\varphi^1,\varphi^2\}$ with structure equations
$$ d\varphi^1 = \sqrt{-1}\,\varphi^1\wedge\varphi^2+\sqrt{-1}\,\varphi^1\wedge\bar\varphi^2, \qquad d\varphi^2=-\sqrt{-1}\,\varphi^1\wedge\bar\varphi^1 .$$
For the invariant metric with fundamental form
$$ \omega =  \frac{\sqrt{-1}}{2}\, r^2\, \varphi^1\wedge\bar\varphi^1 + \frac{\sqrt{-1}}{2}\, r^2\,\varphi^2\wedge\bar\varphi^2 , $$
where $r\in\mathbb R \setminus\{0\}$,
we compute the Chern curvature: the only non-zero components are
$$
\Theta_{1\bar11\bar1} = \frac{1}{2}\,r^{2}, \qquad
\Theta_{1\bar12\bar2} = -\frac{1}{2}\,r^{2},
$$
and the ones corresponding to the symmetries.
In particular, the first-Chern-Ricci form is
$$ \mathrm{Ric}^{(1)} = 2\,\sqrt{-1}\, \varphi^1\wedge\bar\varphi^1 $$
and the second-Chern-Ricci form is
$$ \mathrm{Ric}^{(2)} = \sqrt{-1}\, \varphi^1\wedge\bar\varphi^1 + \sqrt{-1}\, \varphi^2\wedge\bar\varphi^2 = \frac{2}{r^2} \omega. $$
namely, the standard metric $\omega$ is strong-(2)-Chern-Einstein with constant Einstein factor $\lambda=\frac{2}{r^2}$.
The Chern-scalar curvature is then $S^{Ch}=\frac{4}{r^{2}}$.

We also notice that the third Chern-Ricci tensor has the only non-zero component $\mathrm{Ric}^{(3)}_{1\bar1}=1$, and it gives the same third Chern-scalar curvauture $S^{(3)}=S^{Ch}$.

\subsubsection{Podest\`a examples}\label{subsubsec:podesta}
More examples of strong-(2)-Chern-Einstein metrics with constant positive Einstein factor are provided by F. Podest\`a in \cite{podesta} among $\mathcal C$-manifolds.
They are compact simply-connected complex homogeneous spaces, given by the product of two $G_j$-homogeneous spaces of the form $G_j / L_j$ where $L_j$ is a connected subgroup of $G_j$ that coincides with the semisimple part of the centralizer of a torus in $G_j$, for $j\in\{1,2\}$. Manifolds in such class are $\mathbb T^2$-bundle over the product of two compact irreducible Hermitian symmetric spaces, and can be endowed with a two-parameter family of inequivalent invariant complex structures, which do not admit either K\"ahler or balanced metrics. Examples are given by the Calabi-Eckmann manifolds. In \cite[Theorem 3]{podesta}, it is shown that, for any such invariant complex structure, there exists an invariant Hermitian metric being strong-(2)-Chern-Einstein with Einstein factor equal to $1$.

\subsubsection{Compact complex surfaces diffeomorphic to solvmanifolds}\label{subsubsec:surfaces}
Other than the K\"ahler-Einstein case, compact complex surfaces being first-Chern-Einstein are completely understood thanks to the condition $c_1^{BC}(X)=0$: they include Kodaira surfaces beside Calabi-Yau surfaces (namely, complex $2$-dimensional tori, Enriques surfaces, bi-elliptic surfaces, K3 surfaces).
In \cite[Theorem 2]{gauduchon-ivanov}, it is proven that the only non-K\"ahler compact complex surfaces admitting second-Chern-Einstein metrics are the Hopf surfaces. Here, we perform explicit computations on compact complex (non-K\"ahler) surfaces diffeomorphic to solvmanifolds, namely, compact quotients of solvable Lie groups, according to \cite[Theorem 1]{hasegawa-jsg}. Besides the K\"ahler case (complex torus, hyperelliptic surface), we have Inoue surfaces and Kodaira surfaces, endowed with invariant complex structures. We consider invariant Hermitian metrics: in the notation above (we recall that $g=\omega(J\_,\_)$, and $J\alpha=\alpha(J^{-1}\_)$ on the dual), with respect to a chosen coframe $\{\varphi^1,\varphi^2\}$ of invariant $(1,0)$-forms, invariant Hermitian metrics are given by
\begin{equation}\label{eq:metric}
\omega = \frac{\sqrt{-1}}{2} \, r^{2} \varphi^{1}\wedge\bar\varphi^{1} + \frac{\sqrt{-1}}{2} \, s^{2} \varphi^{2}\wedge\bar\varphi^{2} + \frac{1}{2} \, u  \varphi^{1}\wedge\bar\varphi^{2} - \frac{1}{2} \, \overline{u} \varphi^{2}\wedge\bar\varphi^{1} 
\end{equation}
varying $r,s\in\mathbb R \setminus \{0\}$ and $u\in\mathbb C$ such that $r^2s^2-|u|^2>0$.

We give here their curvature tensors, showing by explicit computations that:

\begin{thm}\label{thm:solv-surfaces}
On Inoue and Kodaira surfaces, seen as quotients of solvable Lie groups endowed with invariant complex structures, there is no invariant Hermitian metric being strong-(2)-Chern-Einstein.
\end{thm}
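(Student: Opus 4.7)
The plan is a direct case-by-case computation over the list of four-dimensional solvable Lie algebras underlying Inoue surfaces (of types $S_M$, $S^+$, $S^-$) and primary and secondary Kodaira surfaces, as tabulated in \cite{hasegawa-jsg}. For each such Lie algebra I would fix a global left-invariant coframe $\{\varphi^1,\varphi^2\}$ of $(1,0)$-forms together with its structure equations $d\varphi^k = A^k_{ij}\,\varphi^i\wedge\varphi^j + B^k_{i\bar j}\,\varphi^i\wedge\bar\varphi^j$, read off from the classification. The most general invariant Hermitian metric is then parametrized as in \eqref{eq:metric} by $r, s\in\R\setminus\{0\}$ and $u\in\C$ with $r^2 s^2 - |u|^2 > 0$, so the whole problem becomes finite-dimensional on the Lie algebra.

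The key observation is that on a Lie group the Chern connection forms of a left-invariant Hermitian metric are themselves left-invariant and determined algebraically by $(h_{i\bar j})$ and the structure constants, so the components $\Theta_{i\bar j k\bar\ell}$ are rational expressions in $r, s, u, \bar u$ with denominator $r^2 s^2 - |u|^2$. Contracting gives the second Chern-Ricci form $\mathrm{Ric}^{(2)}(\omega) = h^{i\bar j}\Theta_{i\bar j k\bar\ell}\,\sqrt{-1}\,\varphi^k\wedge\bar\varphi^\ell$, an invariant $(1,1)$-form with three independent coefficients (those of $\varphi^1\wedge\bar\varphi^1$, $\varphi^2\wedge\bar\varphi^2$, and $\varphi^1\wedge\bar\varphi^2$). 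Imposing $\mathrm{Ric}^{(2)}(\omega) = \lambda\,\omega$ with $\lambda\in\R$ -- constancy of $\lambda$ is automatic by invariance, so the weak and strong conditions coincide in this setting -- yields a polynomial system of three equations in the unknowns $r^2, s^2, u, \lambda$. The goal is then to show, Lie algebra by Lie algebra, that no such solution satisfies the positivity condition $r^2 s^2 - |u|^2 > 0$.

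The main obstacle is bookkeeping rather than ideas: once $u \neq 0$ is admitted the expressions grow quickly, and the elimination of $\lambda$ from the two diagonal equations combined with the real and imaginary parts of the off-diagonal equation is most efficiently carried out symbolically with \cite{sage}, in the same spirit as in Section \ref{subsubsec:ovando-negative}. A natural strategy is to show that the off-diagonal equation forces either $u=0$, after which the two diagonal equations become incompatible because the anisotropy of the structure constants distinguishes $\varphi^1$ from $\varphi^2$, or the forbidden degeneration $|u|^2 = r^2 s^2$. The Inoue $S^\pm$ case is the most delicate, since the Lie algebra itself depends on real parameters that must be swept as well, so one has to argue that no specialization of those parameters produces a solution. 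A sanity check is offered by Theorem \ref{thm:obstruction-2-chei}: a hypothetical $\lambda\neq 0$ would preclude non-trivial holomorphic vector fields on Kodaira surfaces, contradicting the existence of the global invariant $(1,0)$-frame, and would impose constraints on the Hodge numbers of the Inoue surfaces, furnishing independent a priori necessary conditions consistent with the expected nonexistence.
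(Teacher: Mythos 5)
Your plan coincides with the paper's proof: a case-by-case check over the solvable Lie algebras from \cite{hasegawa-jsg}, parametrizing the general invariant metric as in \eqref{eq:metric} by $r,s,u$, computing $\mathrm{Ric}^{(2)}(\omega)$ symbolically, and showing the resulting polynomial system has no solution with $r^2s^2-|u|^2>0$ (for the Kodaira surfaces the off-diagonal coefficient indeed forces $u=0$ and the diagonal equations then fail; for the Inoue surfaces the paper instead observes that $\lambda=\tfrac12 S^{Ch}<0$ makes a diagonal coefficient manifestly negative). The only thing missing is the actual execution of the computations, which is where all the content of the theorem lies, but the strategy is the correct one and is exactly the paper's.
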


\begin{proof}The proof is based on a case-by-case check.
\begin{description}
\item[Inoue $S_M$]
The structure equations can be given as $d\varphi^1=\frac{\sqrt{-1}}{4}\varphi^1\wedge\varphi^2-\frac{\sqrt{-1}}{4}\varphi^1\wedge\bar\varphi^2$, $d\varphi^2=\frac{\sqrt{-1}}{2}\varphi^2\wedge\bar\varphi^2$.
The first-Chern-Ricci form is $\mathrm{Ric}^{(1)} = - \frac{\sqrt{-1}}{4} \varphi^{2}\wedge\bar\varphi^{2}$ and the Chern-scalar curvature is $S^{Ch} = -\frac{r^{2}}{2 \, {\left(r^{2} s^{2} - |u|^2 \right)}}<0$.
We also report the third-Chern-scalar curvature: $S^{(3)} = -\frac{r^2 \, (8 \, r^{2} s^{2} + |u|^2) }{8 \, {\left(r^{2} s^{2} - |u|^2 \right)^2}}$.
Looking at the second-Chern-Einstein tensor $\mathrm{Ric}^{(2)}(\omega)-\lambda \omega$ (whose vanishing forces $\lambda=\frac{1}{2}S^{Ch}<0$) as for the coefficient in $\varphi^1\wedge\bar\varphi^1$, we get $8\lambda r^2(r^2s^2-|u|^2)^2-r^4(4r^2s^2+5|u|^2)<0$, so the second-Chern-Einstein equation does not have any solution.

\item[Inoue $S^{\pm}$]
The structure equations can be given as $d\varphi^1=\frac{1}{2\sqrt{-1}}\varphi^{1}\wedge\varphi^{2}+\frac{1}{2\sqrt{-1}}\varphi^{2}\wedge\bar\varphi^{1}+\frac{\sqrt{-1}}{2}\varphi^{2}\wedge\bar\varphi^{2}$, $d\varphi^2=\frac{1}{2\sqrt{-1}}\varphi^{2}\wedge\bar\varphi^{2}$.
The first-Chern-Ricci form is $\mathrm{Ric}^{(1)} = - \frac{\sqrt{-1}}{2} \, \varphi^{2}\wedge\bar\varphi^{2}$ and the Chern-scalar curvature is $S^{Ch} = -\frac{r^{2}}{r^{2} s^{2} - |u|^2 }<0$.
In fact, recall that, by \cite[Remark 4.2]{teleman-mathann}, it holds $\Gamma_X(\{\omega\})<0$ for any conformal class $\{\omega\}$.
Looking at the second-Chern-Einstein tensor $\mathrm{Ric}^{(2)}(\omega)-\lambda \omega$ (whose vanishing forces $\lambda=\frac{1}{2}S^{Ch}<0$) as for the coefficient in $\varphi^1\wedge\bar\varphi^1$, we get $2\lambda r^2(r^2s^2-|u|^2)^2-r^4(r^4+r^2s^2+|u|^2+2\Re u^2)<0$ since $r^2s^2+|u|^2+2\Re u^2\geq r^2s^2+|u|^2-2|u|^2=r^2s^2-|u|^2>0$, so, there is no second-Chern-Einstein metric on $S^+$. In fact, $S^+$ admits non-trivial holomorphic vector fields by \cite[Proposition 3]{inoue}.

\item[primary Kodaira]
The structure equations can be given as $d\varphi^1=0$, $d\varphi^2=\frac{\sqrt{-1}}{2}\varphi^1\wedge\bar\varphi^1$.
It is known that the first-Chern-Ricci form is zero: $\mathrm{Ric}^{(1)} = 0$, for any invariant metric.
The Chern-scalar curvature is then zero, too; the third-Chern-scalar curvature is $ S^{(3)} = -\frac{s^{6}}{2 \, {\left(r^{2} s^{2} - |u|^2 \right)^2}} $.
Looking at the second-Chern-Einstein equation $\mathrm{Ric}^{(2)}(\omega)-\lambda \omega=0$ (whence $\lambda=\frac{1}{2}S^{Ch}=0$) as for the coefficient in $\varphi^1\wedge\bar\varphi^2$, we get $u \left( 2\lambda (r^2s^2-|u|^2)^2 - s^6 \right) = 0$, whence $u=0$; then the coefficient in $\varphi^1\wedge\bar\varphi^1$, that is $\sqrt{-1}s^4(r^2s^2-2|u|^2)$, becomes $\sqrt{-1}r^2s^6$, which is never zero.

\item[secondary Kodaira]
The structure equations can be given as $d\varphi^1=-\frac{1}{2}\varphi^1\wedge\varphi^2+\frac{1}{2}\varphi^1\wedge\bar\varphi^2$, $d\varphi^2=\frac{\sqrt{-1}}{2}\varphi^1\wedge\bar\varphi^1$.
Clearly we known that the invariant metrics are first-Chern-Ricci-flat. In particular, $S^{Ch}=0$.
Looking at the second-Chern-Einstein equation $\mathrm{Ric}^{(2)}(\omega)-\lambda \omega=0$, where in fact $\lambda=0$, as for the coefficient in $\varphi^1\wedge\bar\varphi^2$, we get $-u s^2 \left( \sqrt{-1}(r^2s^2-|u|^2) + (r^4+s^4) \right) = 0$, whence $u=0$; then the coefficient in $\varphi^1\wedge\bar\varphi^1$ becomes $\sqrt{-1}\frac{s^2}{4r^2}$, which is never zero.\qedhere
\end{description}
\end{proof}

\subsubsection{Snow manifold $S5$}\label{subsubsec:snow-zero}
Here we give an example of  a non-compact strong-(2)-Chern-Einstein manifold with constant zero Einstein factor. It is given by an invariant Hermitian metric on a four-dimensional simply-connected solvable real Lie group endowed with an invariant complex structure. Complex structures on four-dimensional Lie algebras are classified by \cite{snow-crelle, snow, ovando-manuscr, achk}, see \cite{ovando}.

More precisely, we consider the Lie group $S5$ with structure equations $[X,Y]=Y$, $[X,W]=\ell W$ with $\ell\neq0$ a real parameter, the other brackets being zero, with respect to a basis $\{X,Y,W,Z\}$. The invariant complex structure is given by the coframe of $(1,0)$-forms $\{\varphi^1,\varphi^2\}$ such that
$$ d\varphi^1=0, \quad d\varphi^2=\frac{\ell}{2}\varphi^1\wedge\varphi^2-\frac{\ell}{2}\varphi^2\wedge\bar\varphi^1.$$
It is not holomorphically-parallelizable.
Consider the generic Hermitian metric
$$ \omega = \sqrt{-1}r^2 \varphi^1\wedge\bar\varphi^1+\sqrt{-1}s^2 \varphi^2\wedge\bar\varphi^2 + u \varphi^1\wedge\bar\varphi^2-\bar u \varphi^2\wedge\bar\varphi^1 $$
where $r\neq0$, $s\neq0$, $r^2s^2-|u|^2>0$.
One sees that is it not K\"ahler.
We notice that such a metric is locally conformally K\"ahler if and only if $u=0$, and in this case the Lee form is $\theta=\frac{1}{2} \, \ell  \varphi^1 + \frac{1}{2} \, \ell  \bar\varphi^1$ (we recall that this means that $d\omega=\theta\wedge\omega$ with $d\theta=0$).
Explicit computations show that
$$ \mathrm{Ric}^{(1)}=0 , $$
and that the non-zero component of $\mathrm{Ric}^{(2)}$ are
$$
\mathrm{Ric}^{(2)}_{1\bar1} = \frac{\ell^{2} r^{2} s^{2} |u|^2}{4 \, {\left(r^{2} s^{2} - |u|^2\right)^2}} , \quad
\mathrm{Ric}^{(2)}_{1\bar2} = -\frac{\sqrt{-1} \, \ell^{2} r^{2} s^{4} u}{4 \, {\left(r^{2} s^{2} - |u|^2\right)^2}} , \quad
\mathrm{Ric}^{(2)}_{2\bar2} = \frac{\ell^{2} s^{4} |u|^2}{4 \, {\left(r^{2} s^{2} - |u|^2\right)^2}} ,
$$
and the symmetric ones.
Moreover, the Chern-scalar curvatures is clearly $S^{Ch}=0$.
Summarizing, {\em $\omega$ is always strong-(1)-Chern-Einstein with zero Einstein factor, for any value of the parameters $r$, $s$, $u$. Moreover, metrics with $u=0$ are also strong-(2)-Chern-Einstein with zero Einstein factor (in fact, they are Chern-flat)}.

For the sake of completeness, we also list the non-zero component of the third Chern-Ricci tensor: $\mathrm{Ric}^{(3)}_{1\bar2} = -\frac{\sqrt{-1} \, \ell^{2} s^{2} u}{4 \, {\left(r^{2} s^{2} - |u|^2 \right)}}$, and the third Chern-scalar curvature $S^{(3)}=-\frac{\ell^{2} s^{2} |u|^2}{2 \, {\left(r^{2} s^{2} - |u|^2\right)^2}}$.

We finally describe examples of complete negative Chern-Einstein metrics on Ovando manifolds:

\subsubsection{Ovando K\"ahler manifold $\mathfrak r_2\mathfrak r_2$}\label{subsubsec:ovando-negative-kahler}
We give an example of {\em a non-compact complete K\"ahler-Einstein manifold with negative Einstein factor}. It is given on the four-dimensional Lie group $\mathfrak{r}_2\mathfrak{r}_2=(0,-12,0,-34)$ in Salamon notation, with invariant complex structure characterized by the coframe $(\varphi^1,\varphi^2)$ of invariant $(1,0)$-forms with structure equations
$$ d\varphi^1=-\frac{1}{2}\varphi^1\wedge\bar\varphi^1 , \qquad d\varphi^2=-\frac{1}{2}\varphi^2\wedge\bar\varphi^2 . $$
We consider an invariant metric $\omega$ of the form \eqref{eq:metric}. We compute the Chern-Ricci forms:
\begin{eqnarray*}
\mathrm{Ric}^{(1)}(\omega) &=& - \frac{\sqrt{-1}}{2} \varphi^{1}\wedge\bar\varphi^1 - \frac{\sqrt{-1}}{2} \varphi^{2}\wedge\bar\varphi^2 , \\
\mathrm{Ric}^{(2)}(\omega) &=&
- \frac{\sqrt{-1} \, {\left(2 \, r^{4} s^{4} - \sqrt{-1} \, r^{2} |u|^2 \bar u - {\left(-\sqrt{-1} \, r^{2} u^{2} + {\left(r^{4} + 3 \, r^{2} s^{2}\right)} u\right)} \overline{u}\right)}}{4 \, {\left(r^{2} s^{2} - |u|^{4} \right)^2}}  \varphi^1\wedge\bar\varphi^1 \\
&& - \frac{\sqrt{-1} \, {\left(r^{2} s^{2} u^{2} - {\left(r^{2} s^{2} u - {\left(\sqrt{-1} \, r^{2} + \sqrt{-1} \, s^{2}\right)} u^{2}\right)} \overline{u}\right)}}{4 \, {\left(r^{2} s^{2} - |u|^{4} \right)^2}}  \varphi^1\wedge\bar\varphi^2 \\
&& + \frac{\sqrt{-1} \, {\left(r^{2} s^{2} |u|^2 - {\left(r^{2} s^{2} - {\left(\sqrt{-1} \, r^{2} + \sqrt{-1} \, s^{2}\right)} u\right)} \overline{u}^{2}\right)}}{4 \, {\left(r^{2} s^{2} - |u|^{4} \right)^2}} \varphi^2\wedge\bar\varphi^1 \\
&& - \left( \frac{\sqrt{-1} \, {\left(2 \, r^{4} s^{4} - \sqrt{-1} \, s^{2} |u|^2 \bar{u} - {\left(-\sqrt{-1} \, s^{2} u^{2} + {\left(3 \, r^{2} s^{2} + s^{4}\right)} u\right)} \overline{u}\right)}}{4 \, {\left(r^{2} s^{2} - |u|^{4} \right)^2}} \right)  \varphi^2\wedge\bar\varphi^2 .
\end{eqnarray*}
For the {\em K\"ahler} diagonal metric
$$ \omega_{\text{diag}} = \frac{\sqrt{-1}}{2}  \varphi^1\wedge\bar\varphi^1 + \frac{\sqrt{-1}}{2} \varphi^2\wedge\bar\varphi^2 $$
corresponding to the parameters $r=1$, $s=1$, $u=0$, we get
$$ \mathrm{Ric}^{(2)} (\omega_{\text{diag}}) = \mathrm{Ric}^{(1)} (\omega_{\text{diag}}) = - \frac{\sqrt{-1}}{2}  \varphi^1\wedge\bar\varphi^1 - \frac{\sqrt{-1}}{2} \varphi^2\wedge\bar\varphi^2 = -\omega_{\text{diag}} , $$
that is, the diagonal metric is complete K\"ahler-Einstein with negative Einstein factor.

\subsubsection{Ovando non-K\"ahler manifold $\mathfrak r_{4,-1,-1}$}\label{subsubsec:ovando-negative}
We give an example of {\em a non-compact non-K\"ahler strong-(2)-Chern-Einstein manifold with negative Einstein factor}. It is given on the four-dimensional Lie group $\mathfrak{r}_{4,\alpha,\beta}=(14,\alpha 24,\beta 34,0)$ with $\alpha=\beta=-1$, in Salamon notation, with invariant complex structure characterized by the coframe $(\varphi^1,\varphi^2)$ of invariant $(1,0)$-forms with structure equations
$$ d\varphi^1=-\frac{\sqrt{-1}}{2}\varphi^1\wedge\bar\varphi^1 , \qquad d\varphi^2=-\frac{\sqrt{-1}}{2}\varphi^1\wedge\varphi^2 -\frac{\sqrt{-1}}{2}\varphi^2\wedge\bar\varphi^1. $$
We consider an invariant metric $\omega$ of the form \eqref{eq:metric}.
By computing $d\omega=-\frac{\sqrt{-1}}{2} \bar u \varphi^{12\bar1}- \frac{1}{2} s^2 \varphi^{12\bar2} + \frac{\sqrt{-1}}{2} u \varphi^{1\bar1\bar2} - \frac{1}{2} s^2 \varphi^{2\bar1\bar2}$, we notice that $\omega$ is never K\"ahler.

We compute the Chern-Ricci forms:
\begin{eqnarray*}
\mathrm{Ric}^{(1)}(\omega) &=& - \sqrt{-1} \varphi^1 \wedge \bar\varphi^1 , \\
\mathrm{Ric}^{(2)}(\omega) &=& -\frac{\sqrt{-1} \, r^{2} s^{2}}{2 \, {\left(r^{2} s^{2} - |u|^2 \right)}} \varphi^1\wedge\bar\varphi^1 - \frac{s^{2} u}{2 \, {\left(r^{2} s^{2} - |u|^2 \right)}} \varphi^1\wedge\bar\varphi^2 \\
&& + \frac{s^{2} \overline{u}}{2 \, {\left(r^{2} s^{2} - |u|^2 \right)}}  \varphi^2\wedge\bar\varphi^1 - \frac{\sqrt{-1} \, s^{4}}{2 \, {\left(r^{2} s^{2} - |u|^2 \right)}} \varphi^2\wedge\bar\varphi^2 \\
&=& -\frac{s^{2}}{r^{2} s^{2} - |u|^2 } \cdot \left( \frac{\sqrt{-1}}{2} \, r^{2} \varphi^1\wedge\bar\varphi^1 + \frac{1}{2}u \varphi^1\wedge\bar\varphi^2 - \frac{1}{2}\overline{u} \varphi^2\wedge\bar\varphi^1 + \frac{\sqrt{-1}}{2} \, s^{2} \varphi^2\wedge\bar\varphi^2 \right) \\
&=& \frac{1}{2} S^{Ch} \cdot \omega.
\end{eqnarray*}
where the Chern-scalar curvature is
$$ S^{Ch}(\omega) = -\frac{2 \, s^{2}}{r^{2} s^{2} - |u|^2 } < 0 . $$

Clearly, $\omega$ is never (1)-Chern-Einstein, and is always strong-(2)-Chern-Einstein with negative Einstein factor.

\begin{rmk}
Also in view of \cite{gauduchon-ivanov} in complex dimension two, it would be very interesting to find (if any!) examples of negative \emph{compact} non-K\"ahler (2)-Chern-Einstein metrics.
\end{rmk}

%
%



\end{document}